\newcommand{\betrag}[1]{\left|#1\right|}
\newcommand{\gklammer}[1]{\left\{#1\right\}}
\newcommand{\eklammer}[1]{\left[#1\right]}
\newcommand{\sklammer}[1]{\left\langle#1\right\rangle}
\newtheorem{Lem}{Lemma}[section]
\newtheorem{Thm}{Theorem}[section]
\newtheorem{Cor}{Corollary}[section]
\newtheorem{Prop}{Proposition}
\theoremstyle{definition}
\newtheorem{Rem}{Remark}
\theoremstyle{remark}
\newcommand{\rd}{\ensuremath{\mathrm{d}}} 
\newcommand{\id}{\ensuremath{\mathrm{id}}} 
\newcommand{\Aut}{\ensuremath{\mathrm{Aut}}} 
\newcommand{\op}{\ensuremath{\mathrm{op}}} 
\newcommand{\cop}{\ensuremath{\mathrm{cop}}} 
\newcommand{\e}{\ensuremath{\mathrm{e}}} 
\newcommand{\ot}{\otimes}
\newcommand{\ds}{\mathds}
\newcommand{\cC}{\mathds{C}}
\newcommand{\rR}{\mathds{R}}
\newcommand{\calA}{\mathcal{A}}
\newcommand{\calC}{\mathcal{C}}
\newcommand{\calB}{\mathcal{B}}
\newcommand{\calH}{\mathcal{H}}
\newcommand{\eins}{\mathds{1}}
\newcommand{\mubeta}{
\xy
(-5,0)*{}; (31,0)*{}; 
(-1,14)*{}; (11,14)*{} **\dir{-};
(-1,0)*{}; (11,0)*{} **\dir{-};
(16,14)*{}; (28,14)*{} **\dir{-};
(16,0)*{}; (28,0)*{} **\dir{-};
\vcross~{(0,13)}{(5,13)}{(0,9)}{(5,9)};
(0,14)*{}; (0,13)*{} **\dir{-};
(5,14)*{}; (5,13)*{} **\dir{-};
\vcross~{(5,9)}{(10,9)}{(5,5)}{(10,5)};
(10,5)*{}; (10,0)*{} **\dir{-};
(2.5,0)*{}; (2.5,2)*{} **\dir{-};
(0,4)*{};(5,4)*{} **\crv{(0,2)&(2.5,2)&(5,2)};
(2.5,3)*{\cdot};
(0,4)*{}; (0,9)*{} **\dir{-};
(10,9)*{}; (10,14)*{} **\dir{-};
(5,4)*{}; (5,5)*{} **\dir{-};
(13.5,7)*{=};
(17,14)*{}; (17,6)*{} **\dir{-};
\vcross~{(17,6)}{(24.5,6)}{(17,1)}{(24.5,1)};
(17,1)*{}; (17,0)*{} **\dir{-};
(24.5,1)*{}; (24.5,0)*{} **\dir{-};
(24.5,6)*{}; (24.5,11)*{} **\dir{-};
(22,13)*{};(27,13)*{} **\crv{(22,11)&(24.5,11)&(27,11)}; 
(24.5,12)*{\cdot};
(27,13)*{}; (27,14)*{} **\dir{-};
(22,13)*{}; (22,14)*{} **\dir{-};
\endxy}
\newcommand{\mubetaa}{
\xy
(-5,0)*{}; (31,0)*{}; 
(-1,14)*{}; (11,14)*{} **\dir{-};
(-1,0)*{}; (11,0)*{} **\dir{-};
(16,14)*{}; (28,14)*{} **\dir{-};
(16,0)*{}; (28,0)*{} **\dir{-};
(0,14)*{}; (0,9)*{} **\dir{-};
(5,14)*{}; (5,13)*{} **\dir{-};
(10,14)*{}; (10,13)*{} **\dir{-};
\vcross~{(5,13)}{(10,13)}{(5,9)}{(10,9)};
\vcross~{(0,9)}{(5,9)}{(0,5)}{(5,5)};
(10,9)*{}; (10,4)*{} **\dir{-};
(5,5)*{}; (5,4)*{} **\dir{-};
(0,5)*{}; (0,0)*{} **\dir{-};
(5,4)*{};(10,4)*{} **\crv{(5,2)&(7.5,2)&(10,2)}; 
(7.5,3)*{\cdot};
(7.5,2)*{}; (7.5,0)*{} **\dir{-};
(13.5,7)*{=};
(17,13)*{};(22,13)*{} **\crv{(17,11)&(19.5,11)&(22,11)}; 
(19.5,12)*{\cdot};
(17,13)*{}; (17,14)*{} **\dir{-};
(22,13)*{}; (22,14)*{} **\dir{-};
(27,14)*{}; (27,6)*{} **\dir{-};
(19.5,11)*{}; (19.5,6)*{} **\dir{-};
\vcross~{(19.5,6)}{(27,6)}{(19.5,1)}{(27,1)};
(19.5,1)*{}; (19.5,0)*{} **\dir{-};
(27,1)*{}; (27,0)*{} **\dir{-};
\endxy}
\newcommand{\einsbeta}{
\xy
(-5,0)*{}; (22,0)*{}; 
(-1,14)*{}; (6,14)*{} **\dir{-};
(-1,0)*{}; (6,0)*{} **\dir{-};
(11,14)*{}; (18,14)*{} **\dir{-};
(11,0)*{}; (18,0)*{} **\dir{-};
(5,14)*{}; (5,0)*{} **\dir{-};
(0,0)*{}; (0,7)*{} **\dir{-};
(0,7.5)*{\circ};
(8.5,7)*{=};
(12,14)*{}; (12,5)*{} **\dir{-};
(17,9)*{}; (17,5)*{} **\dir{-};
(17,9.5)*{\circ};
\vcross~{(12,5)}{(17,5)}{(12,1)}{(17,1)};
(12,1)*{}; (12,0)*{} **\dir{-};
(17,1)*{}; (17,0)*{} **\dir{-};
\endxy}
\newcommand{\einsbetaa}{
\xy
(-5,0)*{}; (22,0)*{}; 
(-1,14)*{}; (6,14)*{} **\dir{-};
(-1,0)*{}; (6,0)*{} **\dir{-};
(11,14)*{}; (18,14)*{} **\dir{-};
(11,0)*{}; (18,0)*{} **\dir{-};
(0,14)*{}; (0,0)*{} **\dir{-};
(5,0)*{}; (5,7)*{} **\dir{-};
(5,7.5)*{\circ};
(8.5,7)*{=};
(17,14)*{}; (17,5)*{} **\dir{-};
(12,9)*{}; (12,5)*{} **\dir{-};
(12,9.5)*{\circ};
\vcross~{(12,5)}{(17,5)}{(12,1)}{(17,1)};
(12,1)*{}; (12,0)*{} **\dir{-};
(17,1)*{}; (17,0)*{} **\dir{-};
\endxy}
\begin{document}


\title{Additive Deformations of Braided Hopf Algebras}

\author{Malte Gerhold\\
\emph{\small malte.gerhold@uni-greifswald.de}\\
Stefan Kietzmann\\
\emph{\small stefan.kietzmann.stud@uni-greifswald.de}\\
Stephanie Lachs\\
\emph{\small sl041036@uni-greifswald.de}
}

\date{January 31, 2011}

\maketitle



\begin{abstract}
Additive deformations of bialgebras in the sense of \cite{Wirth}, i.e.\ deformations of the multiplication map fulfilling a certain compatibility condition w.r.t.\ the coalgebra structure, can be generalized to braided bialgebras. The theorems for additive deformations of Hopf algebras can also be carried over to that case. We consider $*$-structures and prove a general Schoenberg correspondence in this context. Finally we give some examples.
\end{abstract}


\section{Introduction}
Consider the $*$-algebra $\calA$ generated by $a$, $a^*$ and $\eins$ with the relation $\eklammer{a,a^*}=\eins$. One wants to define a comultiplication on the generators by
\begin{align*}
\Delta(a)=a\ot\eins+\eins\ot a,\quad
\Delta(a^*)=a^*\ot\eins+\eins\ot a^*,
\end{align*}
but this cannot be extended as an algebra homomorphism from $\calA$ to $\calA \ot \calA$ since the relation is not respected. It is however easily checked that it can be extended as an algebra homomorphism from $\calA$ to $\calA_t\ot \calA_s$ with $t, s \in \rR$ and $t+s=1$. Here $\calA_r$ denotes the algebra with the same generators as $\calA$ but under relation $\eklammer{a,a^*}_r=r\eins$ for $r\in \ds R$. One observes that all $\calA_t$ are defined on the same vector space and $\calA_0$ is just the polynomial $*$-bialgebra in two commuting adjoint indeterminates. This observation gave rise to the definition of additive deformations of $*$-bialgebras in \cite{Wirth} as a family of multiplications $(\mu_t)_{t\in\ds R}$ on some $*$-coalgebra $\calB$ s.t.\ $(\calB,\mu_t)$ is a unital $*$-algebra with the same unit and involution for all $t \in \rR$ and where the main feature is, that the comultiplication $\Delta$ is an $*$-algebra homomorphism from $(\calB,\mu_{t+s})$ to $(\calB,\mu_t)\ot(\calB,\mu_s)$. Particularly, $(\calB,\mu_0)$ is a $*$-bialgebra. It was shown in \cite{Wirth} that all additive deformations are of the form $\mu_t=\mu_0\star \e_\star^{tL}$, where $L$ is a linear functional on $\calB \ot \calB$, and all linear functionals on a bialgebra $\calB$ s.t.\ this equation defines an additive deformation were characterized. J. \textsc{Wirth} also showed that a pointwise continuous convolution semigroup $(\varphi_t)_{t\geq0}$ on $\calB$ with generator\footnote{That means $\varphi_t=\e_\star^{t\psi}$.} $\psi$ consists of states $\varphi_t$ on $\calB_t$ iff $\psi$ is $L$-conditionally positive in the sense that $\psi(a^*a) + L(a^*\ot a) \geq 0$ for all $a \in \ker \delta$. This generalizes the Schoenberg correspondence in the sense of \cite{schurman}.

In \cite{Diplomarbeit} quantum Lévy processes on additive deformations of $*$-bialgebras were constructed and for the additive deformation on $\mathds{C}\eklammer{x,x^*}$, discussed in the beginning, this resulted in a pair of operator processes fulfilling canonical commutation relations.

If one wants to mimic the constructions for the algebra with two adjoint anti-commu-ting generators, there is the problem that this is not a $*$-bialgebra, but a graded $*$-bialgebra (see e.g.\ \cite{schurman}). In this paper we generalize the definition of an additive deformation even to the case of a braided bialgebra in the sense of \cite{Majid} (resp.\ \cite{Franz} for $*$-bialgebras). However, we do not work with braided tensor categories, but it is sufficient in our context to define braided vector spaces as in \cite{Ufer}, because we are concerned only with tensor powers of the same vector space. For more clarity we use a graphic calculus, in the literature known as braid diagramms.

We show how the generator calculus and the Schoenberg correspondence of \cite{Wirth} can be carried over to braided $*$-bialgebras (see section \ref{secgenerator} resp.\ \ref{secSchoen}). Whereas most results can be proved along the lines of \cite{Wirth} for the bialgebra case, the diagrammatic approach gives more insight into the structure of these proofs and things get more involved in the $*$-bialgebra case, where we use the definition of a braided $*$-bialgebra of \cite{Franz}.
Our version of the Schoenberg correspondence (theorem \ref{Sa:Schoenberg_def}) generalizes and unifies two older versions: 
\begin{itemize}
\item In \cite{FSS} there are no additive deformations allowed. 
\item In \cite{Wirth} additive deformations are considered, but no braidings. 
\end{itemize}

In \cite{Preprint} the case of $\calB$ being a Hopf algebra was considered. A Hopf algebra is a bialgebra with antipode, i.e.\ a linear map $S: \calB \to \calB$ which is inverse to the identity map w.r.t.\ convolution. In other words one has $\mu\circ(S\ot \id)\circ\Delta=\mu\circ(\id\ot S)\circ\Delta=\eins\delta$. It was shown that for an additive deformation $(\mu_t)_{t\in\ds R}$ there are so called deformed antipodes $S_t$, which are inverse to the identity w.r.t.\ the convolution corresponding to $\mu_t$. 
This is still true in the braided case, as we show in section \ref{secHopf}.

Finally we present as an example an additive deformation of a braided Hopf $*$-algebra generated by two adjoint, anti-commuting indeterminates (see section \ref{sec:examples}).


\section{Basic Concepts}
A \emph{braided vector space} is a pair $(V, \beta)$ consisting of a vector space $V$ and a braiding $\beta \in \Aut(V \ot V)$, i.e.\ a linear automorphism on $V \ot V$, which satisfies the braid equation
\begin{align*}
	(\beta \ot \id) \circ (\id \ot \beta) \circ (\beta \ot \id) = (\id \ot \beta) \circ (\beta \ot \id) \circ (\id \ot \beta).
\end{align*}
For reasons of clarity and comprehensibility, in the following we use well known braid diagramms (see \cite{Majid}) to express coherences with braidings. In this notation the braid equation can be visualized by
\[ \xy
(-1,14)*{}; (11,14)*{} **\dir{-};
(-1,0)*{}; (11,0)*{} **\dir{-};
(16,14)*{}; (28,14)*{} **\dir{-};
(16,0)*{}; (28,0)*{} **\dir{-};
(0,13)*{}; (0,14)*{} **\dir{-};
(5,13)*{}; (5,14)*{} **\dir{-};
(0,9)*{}; (0,5)*{} **\dir{-};
(10,9)*{}; (10,14)*{} **\dir{-};
(10,5)*{}; (10,0)*{} **\dir{-};
(0,1)*{}; (0,0)*{} **\dir{-};
(5,1)*{}; (5,0)*{} **\dir{-};
\vtwist~{(0,9)}{(5,9)}{(0,13)}{(5,13)};
\vtwist~{(5,5)}{(10,5)}{(5,9)}{(10,9)};
\vtwist~{(0,1)}{(5,1)}{(0,5)}{(5,5)};
(13.5,7)*{=};
(22,13)*{}; (22,14)*{} **\dir{-};
(27,13)*{}; (27,14)*{} **\dir{-};
\vtwist~{(22,9)}{(27,9)}{(22,13)}{(27,13)};
\vtwist~{(17,5)}{(22,5)}{(17,9)}{(22,9)};
(17,9)*{}; (17,14)*{} **\dir{-};
\vtwist~{(22,1)}{(27,1)}{(22,5)}{(27,5)};
(17,5)*{}; (17,0)*{} **\dir{-};
(27,9)*{}; (27,5)*{} **\dir{-};
(22,1)*{}; (22,0)*{} **\dir{-};
(27,1)*{}; (27,0)*{} **\dir{-};
\endxy \]
In our case we do not assume that the braiding fulfils the symmetry condition $\beta^2=\id_{V \ot V}$. So we distinguish them by under and over crossing 
\[ \xy
(-4,3)*{\beta =};
(0,6)*{}; (0,5)*{} **\dir{-};
(5,6)*{}; (5,5)*{} **\dir{-};
(0,1)*{}; (0,0)*{} **\dir{-};
(5,1)*{}; (5,0)*{} **\dir{-};
\vcross~{(0,5)}{(5,5)}{(0,1)}{(5,1)};
\endxy
\hspace*{2em}
\xy
(-6,3)*{\beta^{-1} =};
(0,6)*{}; (0,5)*{} **\dir{-};
(5,6)*{}; (5,5)*{} **\dir{-};
(0,1)*{}; (0,0)*{} **\dir{-};
(5,1)*{}; (5,0)*{} **\dir{-};
\vtwist~{(0,5)}{(5,5)}{(0,1)}{(5,1)};
\endxy \]
Other morphisms will be represented as nodes on a string with the corresponding number of input and ouput strings, e.g.\ for the multiplication $\mu: \calA \ot \calA \to \calA$ and the unit $\eins: \ds{K} \to \calA$ on an algebra $\calA$ resp.\ for the comultiplication $\Delta: \calC \to \calC \ot \calC$ and the counit $\delta: \calC \to \ds{K}$ on a coalgebra $\calC$ we use the shorthand
\[ \xy
(-5,2)*{\mu =};
(0,5)*{}; (0,4)*{} **\dir{-};
(5,5)*{}; (5,4)*{} **\dir{-};
(2.5,2)*{}; (2.5,0)*{} **\dir{-};
(0,4)*{};(5,4)*{} **\crv{(0,2)&(2.5,2)&(5,2)}; 
(2.5,3)*{\cdot}
\endxy 
\hspace*{2em}
\xy
(-5,2.5)*{\eins =};
(0,0)*{}; (0,2)*{} **\dir{-};
(0,0)*{}; (0,5)*{} **\dir{};
(0,2.5)*{\circ};
\endxy
\hspace*{4em}
\xy
(-5,2.5)*{\Delta =};
(0,0)*{}; (0,1)*{} **\dir{-};
(5,0)*{}; (5,1)*{} **\dir{-};
(2.5,3)*{}; (2.5,5)*{} **\dir{-};
(0,1)*{};(5,1)*{} **\crv{(0,3)&(2.5,3)&(5,3)}; 
(2.5,2)*{\cdot}
\endxy
\hspace*{2em}
\xy
(-5,2.5)*{\delta =};
(0,5)*{}; (0,3)*{} **\dir{-};
(0,5)*{}; (0,0)*{} **\dir{};
(0,2.25)*{\circ};
\endxy \]
One defines $\beta_{m,n}: V^{\ot m} \ot V^{\ot n} \to V^{\ot n} \ot V^{\ot m}$ for every braiding $\beta \in \Aut(V \ot V)$ inductively by
\begin{align*}
	\beta_{0,0} := \id_\cC, \quad &\beta_{1,m+1} := (\id_V \ot \beta_{1,m}) \circ (\beta \ot \id_{V^{\ot m}}), \\
	\beta_{1,0} = \beta_{0,1} := \id_V, \quad &\beta_{n+1,m} := (\beta_{n,m} \ot \id_V) \circ (\id_{V^{\ot n}} \ot \beta_{1,m}).
\end{align*}
The braiding $\beta_{m,n}$ can be illustrated by the figure
\[ \xy
(-7,9)*{\beta_{m,n} =};
(0,0)*{}; (0,8)*{} **\dir{-};
(2,0)*{}; (2,6)*{} **\dir{-};
(6,0)*{}; (6,2)*{} **\dir{-};
(8,0)*{}; (8,2)*{} **\dir{-};
(10,0)*{}; (10,4)*{} **\dir{-};
(14,0)*{}; (14,8)*{} **\dir{-};
(0,18)*{}; (0,10)*{} **\dir{-};
(2,18)*{}; (2,12)*{} **\dir{-};
(6,18)*{}; (6,16)*{} **\dir{-};
(8,18)*{}; (8,16)*{} **\dir{-};
(10,18)*{}; (10,14)*{} **\dir{-};
(14,18)*{}; (14,10)*{} **\dir{-};
\vcross~{(0,10)}{(2,10)}{(0,8)}{(2,8)};
\vcross~{(2,8)}{(4,8)}{(2,6)}{(4,6)};
\vcross~{(2,12)}{(4,12)}{(2,10)}{(4,10)};
\vcross~{(4,10)}{(6,10)}{(4,8)}{(6,8)};
\vcross~{(6,16)}{(8,16)}{(6,14)}{(8,14)};
\vcross~{(6,4)}{(8,4)}{(6,2)}{(8,2)};
\vcross~{(8,14)}{(10,14)}{(8,12)}{(10,12)};
\vcross~{(8,6)}{(10,6)}{(8,4)}{(10,4)};
\vcross~{(12,10)}{(14,10)}{(12,8)}{(14,8)};
(4,6)*{}; (6,4)*{} **\dir{-};
(6,8)*{}; (8,6)*{} **\dir{-};
(10,12)*{}; (12,10)*{} **\dir{-};
(4,12)*{}; (6,14)*{} **\dir{-};
(6,10)*{}; (8,12)*{} **\dir{-};
(10,6)*{}; (12,8)*{} **\dir{-};
(4,17)*{...};
(12,1)*{...};
(12,17)*{...};
(4,1)*{...};
(3,-3)*{\underbrace{\ }_{n}};
(11,-3)*{\underbrace{\ }_{m}};
(3,21)*{\overbrace{\ }^{m}};
(11,21)*{\overbrace{\ }^{n}};
\endxy 
\]
Note that $(\beta^{-1})_{n,m}$ is its inverse.

Crucial properties for linear maps on braided vector spaces $(V, \beta)$ are the following. A linear map $f: V^{\ot m} \to V^{\ot n}$ is called \emph{$\beta$-invariant}, if
\begin{align*}
	(f \ot \id) \circ \beta_{1,m} = \beta_{1,n} \circ (\id \ot f),
\end{align*}
and accordingly \emph{$\beta^{-1}$-invariant}, if
\begin{align*}
	(\id \ot f) \circ\beta_{m,1} = \beta_{n,1} \circ(f \ot \id).
\end{align*}
In case $f$ fulfils both invariance conditions, we refer to $f$ as \emph{$\beta$-compatible}.

\begin{Rem} \label{kompatibel}
One can easily see that the tensor product and the composition\footnote{If the composition is defined.} of $\beta$-invariant (resp.\ $\beta^{-1}$-invariant and $\beta$-compatible) linear maps is again $\beta$-invariant (resp.\ $\beta^{-1}$-invariant and $\beta$-compatible). For a $\beta$-invariant linear map $f: V^{\ot m} \to V^{\ot n}$ and a $\beta^{-1}$-invariant linear map $g: V^{\ot k} \to V^{\ot l}$, we get
\begin{align*}
	(f \ot g) \circ \beta_{k,m} = \beta_{l,n} \circ (g \ot f).
\end{align*}
As an example for a (trivial) braiding we get the flip-operator $\tau(v \ot w) = w \ot v$. Obviously, all linear maps are $\tau$-compatible.
\end{Rem}

To switch between two braided vector spaces $(V_1, \beta_1)$ and $(V_2, \beta_2)$, we use the notion of a braided morphism. A linear map $f: V_1 \to V_2$ will be called \emph{braided morphism}, if
\begin{align*}
	(f \ot f) \circ \beta_1 = \beta_2 \circ (f \ot f).
\end{align*}
Expressed by braid diagrams, this equation looks like
\[ \xy
(-9,0)*{}; (26,0)*{};
(-1,14)*{}; (6,14)*{} **\dir{-};
(-1,0)*{}; (6,0)*{} **\dir{-};
(11,14)*{}; (18,14)*{} **\dir{-};
(11,0)*{}; (18,0)*{} **\dir{-};
(0,14)*{}; (0,13)*{} **\dir{-};
(5,14)*{}; (5,13)*{} **\dir{-};
\vcross~{(0,13)}{(5,13)}{(0,9)}{(5,9)};
(0,9)*{}; (0,0)*{} **\dir{-};
(5,9)*{}; (5,0)*{} **\dir{-};
(0,4.5)*{\colorbox{white}{f}};
(5,4.5)*{\colorbox{white}{f}};
(8.5,7)*{=};
(0,1)*{}; (0,0)*{} **\dir{-};
(5,1)*{}; (5,0)*{} **\dir{-};
(12,14)*{}; (12,5)*{} **\dir{-};
(17,14)*{}; (17,5)*{} **\dir{-};
\vcross~{(12,5)}{(17,5)}{(12,1)}{(17,1)};
(12,1)*{}; (12,0)*{} **\dir{-};
(17,1)*{}; (17,0)*{} **\dir{-};
(12,9.5)*{\colorbox{white}{f}};
(17,9.5)*{\colorbox{white}{f}};
\endxy \]
A \emph{braided algebra} $(\calA, \mu, \eins, \beta)$ is a unital associative algebra\footnote{Note that in the following all algebra homomorphisms are unital.} $(\calA, \mu, \eins)$ and a braided vector space $(\calA, \beta)$, s.t.\ $\mu$ and $\eins$ are $\beta$-compatible, i.e.\
\begin{align*}
	(\mu \ot \id) \circ \beta_{1,2} = \beta \circ (\id \ot \mu), \quad &(\id \ot \mu) \circ \beta_{2,1} = \beta \circ(\mu \ot \id), \\
	(\eins \ot \id) = \beta \circ (\id \ot \eins), \quad &(\id \ot \eins) = \beta \circ (\eins \ot \id).
\end{align*}
We can visualize these four conditions by
\[
\mubeta
\mubetaa
\einsbeta
\einsbetaa
\]
The multiplication $\mu^{(n)}: \calA^{\ot n} \to \calA$ of $n$ factors is denoted by $\mu^{(n)}(a_1 \ot \cdots \ot a_n) := a_1 \cdots a_n$.

Furthermore, we define $M_1 := \mu$ and $M_n$ for $n\geq2$ inductively  via
\begin{align*}
	M_n := (\mu \ot M_{n-1}) \circ \bigl( \id \ot \beta_{n-1,1} \ot \id^{\ot (n-1)} \bigr).
\end{align*}
 Then $\bigl( \calA^{\ot n}, M_n, \eins^{\ot n}, \beta_{n,n} \bigr)$ becomes a braided algebra.

In particular, 
\begin{align*}
	\bigl( \calA\ot\calA, M, \eins\ot\eins, \beta_{2,2} \bigr)
\end{align*}
is a braided algebra, whereby $M:=M_2=(\mu\ot\mu)\circ(\id\ot\beta\ot\id)$. Note that the usual multiplication map on $\calA \ot \calA$ has changed: We get the braiding $\beta$ instead of the flip operator.

Dually a \emph{braided coalgebra} $(\calC, \Delta, \delta, \beta)$ is a coalgebra $(\calC, \Delta, \delta)$ and a braided vector space $(\calC, \beta)$, s.t.\ $\Delta$ and $\delta$ are $\beta$-compatible, i.e.\
\begin{align*}
	(\Delta \ot \id) \circ \beta = \beta_{1,2} \circ(\id \ot \Delta), \quad &(\id \ot \Delta) \circ\beta = \beta_{2,1} \circ(\Delta \ot \id), \\
	(\delta \ot \id) \circ\beta = (\id \ot \delta), \quad &(\id \ot \delta) \circ\beta = (\delta \ot \id).
\end{align*}
The corresponding diagrams are:
\vspace{-1em}
\[
\rotatebox{180}{\mubeta}
\rotatebox{180}{\mubetaa}
\rotatebox{180}{\einsbeta}
\rotatebox{180}{\einsbetaa}
\]
The comultiplication $\Delta^{(n)}: \calC \to \calC^{\ot n}$ into $n$ factors is $\Delta^{(n)} (c) := \bigl( c_{(1)} \ot \cdots \ot c_{(n)} \bigr)$.

Analogously to the multiplication map in the algebra case, we define 
 $\Lambda_1 := \Delta$ and for $n\geq2$ inductively
\begin{align*}
	\Lambda_n := \bigl( \id \ot \beta_{1,n-1} \ot \id^{\ot (n-1)} \bigr) \circ (\Delta \ot \Lambda_{n-1}).
\end{align*}
Then $\bigl( \calC^{\ot n}, \Lambda_n, \delta^{\ot n}, \beta_{n,n} \bigr)$ becomes a braided coalgebra.
In particular, 
\begin{align*}
	\bigl( \calC\ot\calC, \Lambda, \delta\ot\delta, \beta_{2,2} \bigr)
\end{align*}
is a braided coalgebra, whereby $\Lambda:=\Lambda_2=(\id\ot\beta\ot\id)\circ(\Delta\ot\Delta)$.

\begin{Rem}\label{cocommAlg}
Given a braided algebra $\calA$ and a braided coalgebra $\calC$, it can be easily shown that the \textit{opposite algebra} $\calA^{\op}:= (\calA, \mu \circ \beta, \eins, \beta)$ is a braided algebra and the \textit{coopposite coalgebra} $\calC^{\cop}:= (\calC, \beta \circ \Delta, \delta, \beta)$ is also a braided coalgebra. The algebra $\calA$ is said to be \textit{commutative}, if $\mu = \mu \circ \beta$ and the coalgebra $\calC$ is referred to as \textit{cocommutative} in case $\beta \circ \Delta = \Delta$.
\end{Rem}

A \emph{braided bialgebra} $(\calB, \Delta, \delta, \mu, \eins, \beta)$ is a braided algebra $(\calB, \mu, \eins, \beta)$ and a braided coalgebra $(\calB, \Delta, \delta, \beta)$, s.t.\ $\delta$, $\Delta$ are braided algebra homomorphisms, i.e.\
\begin{align}\label{bialgcond}
	\Delta\circ\mu &= (\mu\ot\mu)\circ(\id\ot\beta\ot\id)\circ(\Delta\ot\Delta)\\
	\delta \circ \mu &= \delta \ot \delta\nonumber
\end{align}
is fulfilled. Equation (\ref{bialgcond}) is called \emph{braided bialgebra condition} and differs from the usual bialgebra condition. In the used graphical calculus the picture
\[ \xy
(-1,14)*{}; (6,14)*{} **\dir{-};
(-1,0)*{}; (6,0)*{} **\dir{-};
(11,14)*{}; (28,14)*{} **\dir{-};
(11,0)*{}; (28,0)*{} **\dir{-};
(0,1)*{};(5,1)*{} **\crv{(0,3)&(2.5,3)&(5,3)}; 
(0,0)*{}; (0,1)*{} **\dir{-};
(5,0)*{}; (5,1)*{} **\dir{-};
(2.5,3)*{}; (2.5,11)*{} **\dir{-};
(2.5,1.5)*{\cdot};
(0,13)*{};(5,13)*{} **\crv{(0,11)&(2.5,11)&(5,11)}; 
(2.5,12)*{\cdot};
(0,13)*{}; (0,14)*{} **\dir{-};
(5,13)*{}; (5,14)*{} **\dir{-};
(8.5,7)*{=};
(14.5,0)*{}; (14.5,2)*{} **\dir{-};
(12,4)*{};(17,4)*{} **\crv{(12,2)&(14.5,2)&(17,2)}; 
(12,4)*{}; (12,5)*{} **\dir{-};
(17,4)*{}; (17,5)*{} **\dir{-};
(14.5,3)*{\cdot};
(24.5,0)*{}; (24.5,2)*{} **\dir{-};
(22,4)*{};(27,4)*{} **\crv{(22,2)&(24.5,2)&(27,2)}; 
(24.5,3)*{\cdot};
\vcross~{(17,9)}{(22,9)}{(17,5)}{(22,5)};
(12,4)*{}; (12,8)*{} **\dir{-};
(27,4)*{}; (27,8)*{} **\dir{-};
(22,5)*{}; (22,4)*{} **\dir{-};
(12,10)*{};(17,10)*{} **\crv{(12,12)&(14.5,12)&(17,12)}; 
(14.5,12)*{}; (14.5,14)*{} **\dir{-};
(14.5,10.5)*{\cdot};
(17,10)*{}; (17,9)*{} **\dir{-};
(12,10)*{}; (12,8)*{} **\dir{-};
(22,10)*{};(27,10)*{} **\crv{(22,12)&(24.5,12)&(27,12)}; 
(24.5,12)*{}; (24.5,14)*{} **\dir{-};
(24.5,10.5)*{\cdot};
(22,10)*{}; (22,9)*{} **\dir{-};
(27,10)*{}; (27,8)*{} **\dir{-};
\endxy \]
represents this equation.

A homomorphism $f: (\calB_1, \beta_1) \to (\calB_2, \beta_2)$ of braided bialgebras is defined as a homomorphism of algebras and coalgebras s.t.\ $(f \ot f) \circ \beta_1 = \beta_2 \circ(f \ot f)$.  

Let $(\calC,\Delta,\delta)$ be a coalgebra and let $(\calA,\mu,\eins)$ be an algebra. Then for linear maps $f,g:\calC\to\calA$ the 
\emph{convolution} is declared by
\begin{align}\label{convol}
f \star g := \mu \circ ( f \ot g ) \circ \Delta.
\end{align}
This product gives the vector space of all linear maps from $\calC$ to $\calA$ the structure of a unital algebra with unit $\eins\delta$. If $\mu, \Delta, f$ and $g$ are $\beta$-invariant, this also holds for $f\star g$, which follows directly from Remark \ref{cocommAlg}.

If $\calB$ is a braided bialgebra, $\calB^{\ot n}$ is a coalgebra and $\calB^{\ot m}$ is an algebra for all $n,m\in\ds{N}$, so convolution is defined for maps $R,T:\calB^{\ot n}\to\calB^{\ot m}$. 

Of course $\ds{C}$ is an algebra, so there is also convolution for linear functionals on a coalgebra. Now we get the formula 
\begin{align*}
\varphi \star \psi := ( \varphi \ot \psi ) \circ \Delta
\end{align*}
for $\varphi,\psi:\calC\to\ds{C}$, since $\ds{C}\ot\ds{C}$ can be identified with $\ds{C}$. For every linear functional $\psi$ on a coalgebra $\calC$ and every $c\in \calC$ the convolution exponential
\begin{align*}
e_\star^{t\psi}(c):=\sum_{n=0}^{\infty} \frac{\psi^{\star n}(c)}{n!}
\end{align*}
converges as a consequence of the \emph{fundamental theorem for coalgebras} (see \cite{Sweedler} and \cite{schurman}). Writing $\varphi_t:=e_\star^{t\psi}$ for $t\geq0$, the $\varphi_t$ constitute a pointwise continuous convolution semigroup, i.e.
\begin{align*}
\varphi_t \star \varphi_s = \varphi_{t+s} \quad\text{and}\quad \lim_{t\rightarrow 0} \varphi_t(c)=\varphi_0(c)=\delta(c)
\end{align*}
for all $c\in\calC$. On the other hand every pointwise continuous convolution semigroup $(\varphi_t)_{t\geq0}$ has a generator $\psi$ s.t.\ $\varphi_t:=e_\star^{t\psi}$ and $\psi$ can be recovered from the convolution semigroup via
\begin{align*}
\psi(c):=\left.\frac{\rd}{\rd t} \varphi_t(c)\right|_{t=0} \equiv \lim_{t \to 0^+} \frac{1}{t} \left.( \varphi_t - \delta \right)(c)
\end{align*}
which is defined for every $c\in\calC$.

A braided bialgebra $\calH$ is called \emph{braided Hopf algebra}, if the identity map is invertible w.r.t.\ the convolution (\ref{convol}), i.e.\ there exists a linear map $S: \calH \to \calH$ called \emph{antipode} with
\begin{align*}
	\mu \circ (S \ot \id) \circ \Delta = \eins \delta = \mu \circ (\id \ot S) \circ \Delta,
\end{align*}
expressed by
\[ \xy
(-1,0)*{}; (6,0)*{} **\dir{-};
(-1,14)*{}; (6,14)*{} **\dir{-};
(2.5,0)*{}; (2.5,2)*{} **\dir{-};
(0,4)*{};(5,4)*{} **\crv{(0,2)&(2.5,2)&(5,2)};
(2.5,3)*{\cdot};
(0,10)*{};(5,10)*{} **\crv{(0,12)&(2.5,12)&(5,12)};
(2.5,12)*{}; (2.5,14)*{} **\dir{-};
(2.5,10.5)*{\cdot};
(0,4)*{}; (0,10)*{} **\dir{-};
(5,4)*{}; (5,10)*{} **\dir{-};
(0,7)*{\colorbox{white}{S}};
(8.5,7)*{=};
(11,0)*{}; (13,0)*{} **\dir{-};
(11,14)*{}; (13,14)*{} **\dir{-};
(12,0)*{}; (12,4)*{} **\dir{-};
(12,10)*{}; (12,14)*{} **\dir{-};
(12,4.5)*{\circ};
(12,9.25)*{\circ};
(15.5,7)*{=};
(18,0)*{}; (25,0)*{} **\dir{-};
(18,14)*{}; (25,14)*{} **\dir{-};
(21.5,0)*{}; (21.5,2)*{} **\dir{-};
(19,4)*{};(24,4)*{} **\crv{(19,2)&(21.5,2)&(24,2)};
(21.5,3)*{\cdot};
(19,10)*{};(24,10)*{} **\crv{(19,12)&(21.5,12)&(24,12)};
(21.5,12)*{}; (21.5,14)*{} **\dir{-};
(21.5,10.5)*{\cdot};
(19,4)*{}; (19,10)*{} **\dir{-};
(24,4)*{}; (24,10)*{} **\dir{-};
(24,7)*{\colorbox{white}{S}};
\endxy \]
\begin{Rem}
For a braided Hopf algebra $(\calH, \Delta, \delta, \mu, \eins, S, \beta)$ the following properties are fulfilled.
\begin{itemize}
\item The antipode $S$ is unique and a braided algebra and colagebra anti-homomorphism,
i.e.\
\begin{align*}
	S \circ \mu = \mu \circ \beta \circ (S \ot S), \quad S \circ \eins = \eins, \quad \quad \Delta \circ S = \beta \circ (S \ot S) \circ \Delta, \quad \delta \circ S = \delta
\end{align*}
are satisfied.\footnote{The first equation follows from the fact that both sides of this equation are convolution inverses of $\mu$. The third equation follows analogously to the first. The second and fourth equation can be shown as in the non-braided case.}
\item The antipode $S$ is $\beta$-compatible, i.e.\
\begin{align*}
	(S \ot \id) \circ \beta = \beta \circ (\id \ot S), \quad (\id \ot S) \circ \beta = \beta \circ (S \ot \id).
\end{align*}
This can be visualized by
\[ \xy
(-9,0)*{}; (26,0)*{};
(-1,14)*{}; (6,14)*{} **\dir{-};
(-1,0)*{}; (6,0)*{} **\dir{-};
(11,14)*{}; (18,14)*{} **\dir{-};
(11,0)*{}; (18,0)*{} **\dir{-};
(0,14)*{}; (0,13)*{} **\dir{-};
(5,14)*{}; (5,13)*{} **\dir{-};
\vcross~{(0,13)}{(5,13)}{(0,9)}{(5,9)};
(0,9)*{}; (0,0)*{} **\dir{-};
(5,9)*{}; (5,0)*{} **\dir{-};
(0,4.5)*{\colorbox{white}{S}};
(8.5,7)*{=};
(0,1)*{}; (0,0)*{} **\dir{-};
(5,1)*{}; (5,0)*{} **\dir{-};
(12,14)*{}; (12,5)*{} **\dir{-};
(17,14)*{}; (17,5)*{} **\dir{-};
\vcross~{(12,5)}{(17,5)}{(12,1)}{(17,1)};
(12,1)*{}; (12,0)*{} **\dir{-};
(17,1)*{}; (17,0)*{} **\dir{-};
(17,9.5)*{\colorbox{white}{S}};
\endxy
%
\xy
(-9,0)*{}; (26,0)*{};
(-1,14)*{}; (6,14)*{} **\dir{-};
(-1,0)*{}; (6,0)*{} **\dir{-};
(11,14)*{}; (18,14)*{} **\dir{-};
(11,0)*{}; (18,0)*{} **\dir{-};
(0,14)*{}; (0,13)*{} **\dir{-};
(5,14)*{}; (5,13)*{} **\dir{-};
\vcross~{(0,13)}{(5,13)}{(0,9)}{(5,9)};
(0,9)*{}; (0,0)*{} **\dir{-};
(5,9)*{}; (5,0)*{} **\dir{-};
(5,4.5)*{\colorbox{white}{S}};
(8.5,7)*{=};
(0,1)*{}; (0,0)*{} **\dir{-};
(5,1)*{}; (5,0)*{} **\dir{-};
(12,14)*{}; (12,5)*{} **\dir{-};
(17,14)*{}; (17,5)*{} **\dir{-};
\vcross~{(12,5)}{(17,5)}{(12,1)}{(17,1)};
(12,1)*{}; (12,0)*{} **\dir{-};
(17,1)*{}; (17,0)*{} **\dir{-};
(12,9.5)*{\colorbox{white}{S}};
\endxy
\]
\item Suppose in addition that $\calH$ is commutative as an algebra or cocommutative as a coalgebra, then $S^2 = \id$ holds. This fact can be seen out of the following diagrams.\footnote{Here we use the cocommutativity $\beta \circ \Delta = \Delta$. The case of commutativity $\mu \circ \beta = \mu$ can be seen analogously.}
\[
\xy
(-1,28)*{}; (1,28)*{} **\dir{-};
(-1,0)*{}; (1,0)*{} **\dir{-};
(0,0)*{}; (0,28)*{} **\dir{-};
\endxy
\hspace*{0.35em}
\xy
(10,14)*{=};
(10,0)*{};
\endxy
\hspace*{0.35em}
%
\xy
(-1,28)*{}; (6,28)*{} **\dir{-};
(-1,0)*{}; (6,0)*{} **\dir{-};
(2.5,28)*{}; (2.5,26)*{} **\dir{-};
(0,24)*{}; (0,22)*{} **\dir{-};
(2.5,0)*{}; (2.5,2)*{} **\dir{-};
(5,24)*{}; (5,4)*{} **\dir{-};
(0,4)*{}; (0,6)*{} **\dir{-};
(0,24)*{};(5,24)*{} **\crv{(0,26)&(2.5,26)&(5,26)};
(0,4)*{};(5,4)*{} **\crv{(0,2)&(2.5,2)&(5,2)};
(0,21.25)*{\circ};
(0,6.5)*{\circ};
(2.5,3)*{\cdot};
(2.5,24.5)*{\cdot};
\endxy
\hspace*{0.35em}
\xy
(10,14)*{=};
(10,0)*{};
\endxy
\hspace*{0.35em}
%
\xy
(-1,28)*{}; (6,28)*{} **\dir{-};
(-1,0)*{}; (6,0)*{} **\dir{-};
(2.5,28)*{}; (2.5,26)*{} **\dir{-};
(0,24)*{}; (0,22)*{} **\dir{-};
(2.5,0)*{}; (2.5,2)*{} **\dir{-};
(5,24)*{}; (5,4)*{} **\dir{-};
(0,4)*{}; (0,11)*{} **\dir{-};
(0,24)*{};(5,24)*{} **\crv{(0,26)&(2.5,26)&(5,26)};
(0,4)*{};(5,4)*{} **\crv{(0,2)&(2.5,2)&(5,2)};
(0,21.25)*{\circ};
(0,11.5)*{\circ};
(2.5,3)*{\cdot};
(2.5,24.5)*{\cdot};
(0,7)*{\colorbox{white}{$S$}};
\endxy
\hspace*{0.35em}
\xy
(10,14)*{=};
(10,0)*{};
\endxy
\hspace*{0.35em}
%
\xy
(-3.5,28)*{}; (6,28)*{} **\dir{-};
(-3.5,0)*{}; (6,0)*{} **\dir{-};
(2.5,28)*{}; (2.5,26)*{} **\dir{-};
(-2.5,13)*{}; (-2.5,22)*{} **\dir{-};
(2.5,13)*{}; (2.5,22)*{} **\dir{-};
(2.5,0)*{}; (2.5,2)*{} **\dir{-};
(5,24)*{}; (5,4)*{} **\dir{-};
(0,4)*{}; (0,11)*{} **\dir{-};
(0,24)*{};(5,24)*{} **\crv{(0,26)&(2.5,26)&(5,26)};
(0,4)*{};(5,4)*{} **\crv{(0,2)&(2.5,2)&(5,2)};
(-2.5,22)*{};(2.5,22)*{} **\crv{(-2.5,24)&(0,24)&(2.5,24)};
(-2.5,13)*{};(2.5,13)*{} **\crv{(-2.5,11)&(0,11)&(2.5,11)};
(2.5,3)*{\cdot};
(2.5,24.5)*{\cdot};
(0,22.5)*{\cdot};
(0,12)*{\cdot};
(0,7)*{\colorbox{white}{$S$}};
(2.5,17.5)*{\colorbox{white}{$S$}};
\endxy
\hspace*{0.35em}
\xy
(10,14)*{=};
(10,0)*{};
\endxy
\hspace*{0.35em}
%
\xy
(-3.5,28)*{}; (6,28)*{} **\dir{-};
(-3.5,0)*{}; (6,0)*{} **\dir{-};
(2.5,28)*{}; (2.5,26)*{} **\dir{-};
(-2.5,16)*{}; (-2.5,22)*{} **\dir{-};
(2.5,16)*{}; (2.5,22)*{} **\dir{-};
(2.5,0)*{}; (2.5,2)*{} **\dir{-};
(5,24)*{}; (5,4)*{} **\dir{-};
(-2.5,6)*{}; (-2.5,12)*{} **\dir{-};
(2.5,6)*{}; (2.5,12)*{} **\dir{-};
(0,24)*{};(5,24)*{} **\crv{(0,26)&(2.5,26)&(5,26)};
(0,4)*{};(5,4)*{} **\crv{(0,2)&(2.5,2)&(5,2)};
(-2.5,22)*{};(2.5,22)*{} **\crv{(-2.5,24)&(0,24)&(2.5,24)};
(-2.5,6)*{};(2.5,6)*{} **\crv{(-2.5,4)&(0,4)&(2.5,4)};
\vcross~{(-2.5,16)}{(2.5,16)}{(-2.5,12)}{(2.5,12)};
(2.5,3)*{\cdot};
(2.5,24.5)*{\cdot};
(0,22.5)*{\cdot};
(0,5)*{\cdot};
(-2.5,9)*{\colorbox{white}{$S$}};
(2.5,9)*{\colorbox{white}{$S$}};
(2.5,19)*{\colorbox{white}{$S$}};
\endxy
\hspace*{0.35em}
\xy
(10,14)*{=};
(10,0)*{};
\endxy
\hspace*{0.35em}
%
\xy
(-1,30)*{}; (8.5,30)*{} **\dir{-};
(5,30)*{}; (5,28)*{} **\dir{-};
(0,20)*{}; (0,8)*{} **\dir{-};
(5,20)*{}; (5,8)*{} **\dir{-};
(5,4)*{}; (5,2)*{} **\dir{-};
(7.5,26)*{}; (7.5,6)*{} **\dir{-};
(-1,2)*{}; (8.5,2)*{} **\dir{-};
\vcross~{(0,24)}{(5,24)}{(0,20)}{(5,20)};
(2.5,26)*{};(7.5,26)*{} **\crv{(2.5,28)&(5,28)&(7.5,28)};
(0,24)*{};(5,24)*{} **\crv{(0,26)&(2.5,26)&(5,26)};
(0,8)*{};(5,8)*{} **\crv{(0,6)&(2.5,6)&(5,6)};
(2.5,6)*{};(7.5,6)*{} **\crv{(2.5,4)&(5,4)&(7.5,4)};
(0,17)*{\colorbox{white}{$S$}};
(0,11)*{\colorbox{white}{$S$}};
(5,14)*{\colorbox{white}{$S$}};
(5,26.5)*{\cdot};
(2.5,24.5)*{\cdot};
(2.5,7)*{\cdot};
(5,5)*{\cdot};
\endxy
\hspace*{0.35em}
\xy
(10,14)*{=};
(10,0)*{};
\endxy
\hspace*{0.35em}
%
\xy
(-1,28)*{}; (8.5,28)*{} **\dir{-};
(-1,0)*{}; (8.5,0)*{} **\dir{-};
(5,28)*{}; (5,26)*{} **\dir{-};
(5,0)*{}; (5,2)*{} **\dir{-};
(7.5,24)*{}; (7.5,4)*{} **\dir{-};
(5,22)*{}; (5,6)*{} **\dir{-};
(0,22)*{}; (0,6)*{} **\dir{-};
(0,22)*{};(5,22)*{} **\crv{(0,24)&(2.5,24)&(5,24)};
(2.5,24)*{};(7.5,24)*{} **\crv{(2.5,26)&(5,26)&(7.5,26)};
(0,6)*{};(5,6)*{} **\crv{(0,4)&(2.5,4)&(5,4)};
(2.5,4)*{};(7.5,4)*{} **\crv{(2.5,2)&(5,2)&(7.5,2)};
(5,24.5)*{\cdot};
(2.5,22.5)*{\cdot};
(5,3)*{\cdot};
(2.5,5)*{\cdot};
(5,14)*{\colorbox{white}{$S$}};
(0,19)*{\colorbox{white}{$S$}};
(0,9)*{\colorbox{white}{$S$}};
\endxy
\hspace*{0.35em}
\xy
(10,14)*{=};
(10,0)*{};
\endxy
\hspace*{0.35em}
%
\xy
(-1,28)*{}; (8.5,28)*{} **\dir{-};
(-1,0)*{}; (8.5,0)*{} **\dir{-};
(2.5,28)*{}; (2.5,26)*{} **\dir{-};
(2.5,0)*{}; (2.5,2)*{} **\dir{-};
(7.5,22)*{}; (7.5,6)*{} **\dir{-};
(2.5,22)*{}; (2.5,6)*{} **\dir{-};
(0,24)*{}; (0,4)*{} **\dir{-};
(0,24)*{};(5,24)*{} **\crv{(0,26)&(2.5,26)&(5,26)};
(2.5,22)*{};(7.5,22)*{} **\crv{(2.5,24)&(5,24)&(7.5,24)};
(0,4)*{};(5,4)*{} **\crv{(0,2)&(2.5,2)&(5,2)};
(2.5,6)*{};(7.5,6)*{} **\crv{(2.5,4)&(5,4)&(7.5,4)};
(2.5,3)*{\cdot};
(5,5)*{\cdot};
(2.5,24.5)*{\cdot};
(5,22.5)*{\cdot};
(2.5,14)*{\colorbox{white}{$S$}};
(0,7)*{\colorbox{white}{$S$}};
(0,21)*{\colorbox{white}{$S$}};
\endxy
\hspace*{0.35em}
\xy
(10,14)*{=};
(10,0)*{};
\endxy
\hspace*{0.35em}
%
\xy
(-1,28)*{}; (1,28)*{} **\dir{-};
(-1,0)*{}; (1,0)*{} **\dir{-};
(0,0)*{}; (0,28)*{} **\dir{-};
(0,9)*{\colorbox{white}{$S$}};
(0,19)*{\colorbox{white}{$S$}};
\endxy
\]
\item It can be easily seen that the braiding $\beta$ is determined by the formula 
\begin{align*}
	\beta = (\mu \ot \mu) \circ \bigl( S \ot (\Delta \circ \mu) \ot S \bigr) \circ (\Delta \ot \Delta).
\end{align*}
\end{itemize}
\end{Rem}

Now we want to define involutions on braided algebraic structures. We follow the definition of an involutive braided bialgebra given by \textsc{U. Franz} and \textsc{R. Schott} (see \cite{FSS} or \cite{Franz}, section 3.8), which differs from that in \cite{Majid}.

A \emph{braided $*$-bialgebra} $(\calB, \Delta, \delta, \mu, \eins, \beta, *)$ is a braided bialgebra $(\calB, \Delta, \delta, \mu, \eins, \beta)$ with an anti-linear map $*: \calB \to \calB$, s.t.\ $(\calB, \mu, \eins, *)$ is a $*$-algebra and $\calB \ot \calB$ respects the involution in such a way, that the canonical embeddings $\calB \rightarrow \calB\ot\calB \leftarrow \calB$ and $\Delta$ are $*$-algebra homomorphisms.

\begin{Rem} From the definition above we get the following properties concerning braided $*$-bialgebras: 
\begin{itemize}
	\item If the canonical embeddings $a \mapsto a \ot \eins$ and $a \mapsto \eins \ot a$ are $*$-algebra homomorphisms, we have $(a \ot \eins)^* = a^* \ot \eins$ and $(\eins \ot a)^* = \eins \ot a^*$. Since the involution on $\calB \ot \calB$ shall be an anti-algebra homomorphism we get
\begin{align*}
	(a \ot b)^{*} &= \bigl((a \ot \eins)(\eins \ot b)\bigr)^* = (\eins \ot b)^*(a \ot \eins)^* = (\eins \ot b^*)(a^* \ot \eins) \\
	&= (\mu \ot \mu) \circ (\id \ot \beta \ot \id) (\eins \ot b^* \ot a^* \ot \eins)\\
	&= \beta (b^* \ot a^*) = \beta \circ (* \ot *) \circ \tau (a \ot b).
\end{align*}
(Note that we used the $\beta$-compatible multiplication $M = (\mu \ot \mu) \circ (\id \ot \beta \ot \id)$ on $\calB \ot \calB$ instead of the usual one.) It follows that the involution on $\calB \ot \calB$ is given by
\begin{align*}
	\beta \circ (* \ot *) \circ \tau =: *_{\calB \ot \calB},
\end{align*}
where $\tau$ is the usual flip operator $\tau(a \ot b) = b \ot a$.
\item Summarizing, we get an equivalent definition for braided $*$-bialgebras:
A braided $*$-bialgebra $\calB$ is a braided bialgebra $\calB$ with an involution $*$, s.t.\ 
\begin{align*}
	(*_{\calB \ot \calB})^2 =\bigl( \beta \circ (* \ot *) \circ \tau \bigr)^2 = \id_{\calB \ot \calB}.
\end{align*}
The involution $*$ is, in general, not $\beta$-compatible, but fulfils
\begin{align*}
	\beta \circ (* \ot *) \circ \tau = (* \ot *) \circ \tau \circ \beta^{-1}.
\end{align*}
This condition contains the flip operator and the braiding. To avoid confusion, we did not use braid diagrams in calculation with $*$.

\item Note that the multiplication $\mu$ and the comultiplication $\Delta$ fulfil
\begin{align*}
	* \circ \mu = \mu \circ (* \ot *) \circ \tau, \quad \Delta \circ * = *_{\calB \ot \calB} \circ \Delta.
\end{align*}
\end{itemize}
\end{Rem}

Now we want to show a central but not obvious property of hermitian, bilinear functionals on a braided coalgebra. We call a bilinear functional $K: \calC \ot \calC \to \cC$ on a braided coalgebra $\calC$ hermitian, if $K (a^* \ot b^*) = \overline{K (b \ot a)}$ for all $a, b \in \calC$. Note that this condition differs from $K((a\ot b)^{*_{\calB\ot\calB}})=\overline{K(a\ot b)}$.

\begin{Prop} \label{Lhermit}
Suppose we have two hermitian, linear functionals $K, L$ on $\calC \ot \calC$ on a braided $*$-coalgebra $(\calC, \Delta, \delta, \beta)$. If $K$ is $\beta$-invariant or $L$ is $\beta^{-1}$-invariant, the convolution $K \star L$ is hermitian, too.
\end{Prop}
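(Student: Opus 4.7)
The plan is to first reformulate hermitianness as a commutation condition and then reduce the statement to the conjunction of a pure braid identity and a swap identity supplied by the invariance hypothesis. Writing $\mathfrak{t}:=(*\ot *)\circ\tau$, hermitianness of a bilinear functional $F$ on $\calC\ot\calC$ is equivalent to $F\circ\mathfrak{t}=\overline{F}$, so the goal is to show $(K\star L)\circ\mathfrak{t}=\overline{K\star L}$. Using $\Delta\circ *=*_{\calB\ot\calB}\circ\Delta$ together with the derived relation $\beta\circ(*\ot *)=(*\ot *)\circ\tau\circ\beta^{-1}\circ\tau$, one obtains $\Delta(a^*)=y^*\ot x^*$ where $\beta^{-1}(a_{(1)}\ot a_{(2)})=x\ot y$. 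I would then compute $(K\ot L)(\Lambda(b^*\ot a^*))$ by pushing all $*$'s outwards through the middle $\beta$ in $\Lambda$; after introducing the further abbreviations $\beta^{-1}(b_{(1)}\ot b_{(2)})=p\ot q$ and $\beta^{-1}(y\ot p)=\alpha\ot\gamma$, an explicit calculation combined with the hermitianness of $K$ and $L$ yields $(K\star L)(b^*\ot a^*)=\overline{K(\gamma\ot q)\,L(x\ot\alpha)}$. The hermitianness of $K\star L$ thereby reduces to the identity
\[
(L\ot K)\circ(\id\ot\beta^{-1}\ot\id)\circ(\beta^{-1}\ot\beta^{-1})\circ(\Delta\ot\Delta)=(K\ot L)\circ\Lambda
\]
as functionals on $\calC\ot\calC$.

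The key technical ingredient is the swap identity $(L\ot K)=(K\ot L)\circ\beta_{2,2}$ on $\calC^{\ot 4}$, which holds whenever \emph{either} $K$ is $\beta$-invariant \emph{or} $L$ is $\beta^{-1}$-invariant. If $K$ is $\beta$-invariant, I iterate the defining relation $(K\ot\id)\circ\beta_{1,2}=\id\ot K$ along the decomposition $\beta_{2,2}=(\beta_{1,2}\ot\id)\circ(\id\ot\beta_{1,2})$ to produce the vector identity $(K\ot\id_{\calC^{\ot 2}})\circ\beta_{2,2}=\id_{\calC^{\ot 2}}\ot K$; applying $L$ to both sides then gives $(K\ot L)\circ\beta_{2,2}=L\ot K$. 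If instead $L$ is $\beta^{-1}$-invariant, the dual iteration of $(\id\ot L)\circ\beta_{2,1}=L\ot\id$ produces $(L\ot\id_{\calC^{\ot 2}})=(\id_{\calC^{\ot 2}}\ot L)\circ\beta_{2,2}$, and applying $K$ to both sides again yields the swap identity. Both derivations use only the inductive definition of $\beta_{2,2}$ and the commutativity of scalar multiplication with tensor products.

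The remaining ingredient is the purely braid-theoretic identity
\[
\beta_{2,2}\circ(\id\ot\beta^{-1}\ot\id)\circ(\beta^{-1}\ot\beta^{-1})=\id\ot\beta\ot\id,
\]
which follows immediately from the factorization $\beta_{2,2}=(\id\ot\beta\ot\id)\circ(\beta\ot\beta)\circ(\id\ot\beta\ot\id)$ (a short direct check using the inductive definition). Combining this with the swap identity, the left-hand side of the displayed reduction equation becomes $(K\ot L)\circ\beta_{2,2}\circ(\id\ot\beta^{-1}\ot\id)\circ(\beta^{-1}\ot\beta^{-1})\circ(\Delta\ot\Delta)=(K\ot L)\circ(\id\ot\beta\ot\id)\circ(\Delta\ot\Delta)$, which equals the right-hand side. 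The main obstacle in the execution is the bookkeeping in the reduction step: since $*_{\calB\ot\calB}$ is not $\beta$-compatible, braid diagrams cannot be used safely while manipulating $*$-symbols, so every commutation of $\beta$ past $*\ot *$ has to be carried out explicitly via $\beta\circ\mathfrak{t}=\mathfrak{t}\circ\beta^{-1}$.
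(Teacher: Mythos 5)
Your proposal is correct and follows essentially the same route as the paper: both reduce hermitianness of $K\star L$ to the identity $(L\ot K)\circ\beta_{2,2}^{-1}\circ\Lambda=(K\ot L)\circ\Lambda$ by commuting the involutions through $\Lambda$ via $\Delta\circ *=*_{\calB\ot\calB}\circ\Delta$ and $\beta\circ(*\ot*)\circ\tau=(*\ot*)\circ\tau\circ\beta^{-1}$, and both close the argument by applying the $\beta$-invariance of $K$ (resp.\ the $\beta^{-1}$-invariance of $L$) twice, which is exactly your swap identity $(K\ot L)\circ\beta_{2,2}=L\ot K$. The only difference is presentational: you isolate the swap identity and the pure braid identity as separate steps and carry out the middle computation element-wise, whereas the paper runs a single chain of operator compositions.
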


\begin{proof}
That $K$ and $L$ are hermitian means $K = \overline{K} \circ (* \ot *) \circ \tau$ and $L = \overline{L} \circ (* \ot *) \circ \tau$.
\begin{align*}
	&(\overline{K \star L}) \circ (* \ot *) \circ \tau \\
	& \quad = (\overline{K} \ot \overline{L}) \circ \Lambda \circ (* \ot *) \circ \tau \displaybreak[0]\\
	& \quad = (K \ot L) \circ (*\ot*\ot*\ot*) \circ (\tau\ot\tau) \circ (\id\ot\beta\ot\id) \circ (\Delta\ot\Delta) \circ (* \ot *) \circ \tau \displaybreak[0]\\
	& \quad = (K \ot L) \circ (*\ot*\ot*\ot*) \circ (\tau\ot\tau) \circ (\id\ot\beta\ot\id) \circ (\beta\ot\beta) \circ (*\ot*\ot*\ot*) \\
	&\qquad \qquad \circ (\tau\ot\tau) \circ (\Delta\ot\Delta) \circ \tau \displaybreak[0]\\
	& \quad = (K \ot L) \circ (*\ot*\ot*\ot*) \circ (\tau\ot\tau) \circ (\id\ot\beta\ot\id) \circ (*\ot*\ot*\ot*) \circ (\tau\ot\tau) \\
	&\qquad \qquad \circ (\beta^{-1}\ot\beta^{-1}) \circ \tau_{2,2} \circ (\Delta\ot\Delta) \displaybreak[0]\\
	& \quad = (K \ot L) \circ (*\ot*\ot*\ot*) \circ (\tau\ot\tau) \circ (\id\ot\beta\ot\id) \circ (*\ot*\ot*\ot*) \circ (\tau\ot\tau) \\
	&\qquad \qquad \circ \tau_{2,2} \circ (\beta^{-1}\ot\beta^{-1}) \circ (\Delta\ot\Delta) \displaybreak[0]\\
	& \quad = (K \ot L) \circ (*\ot*\ot*\ot*) \circ (\tau\ot\tau) \circ (\id\ot\beta\ot\id) \circ (*\ot*\ot*\ot*) \circ (\id\ot\tau\ot\id) \\
	&\qquad \qquad \circ \tau_{(14)} \circ (\beta^{-1}\ot\beta^{-1}) \circ (\Delta\ot\Delta) \displaybreak[0]\\
	& \quad = (K \ot L) \circ (\tau\ot\tau) \circ (*\ot*\ot*\ot*) \circ (*\ot*\ot*\ot*) \circ (\id\ot\tau\ot\id)\\
	&\qquad \qquad \circ (\id\ot\beta^{-1}\ot\id) \circ \tau_{(14)} \circ (\beta^{-1}\ot\beta^{-1}) \circ (\Delta\ot\Delta) \displaybreak[0]\\
	& \quad = (K \ot L) \circ (\tau\ot\tau) \circ (\id\ot\tau\ot\id) \circ \tau_{(14)} \circ (\id\ot\beta^{-1}\ot\id) \circ (\beta^{-1}\ot\beta^{-1}) \circ (\Delta\ot\Delta) \displaybreak[0]\\
	& \quad = (K \ot L) \circ \tau_{2,2} \circ \beta_{2,2}^{-1} \circ \Lambda \\
	& \quad = (L \ot K) \circ \beta_{2,2}^{-1} \circ \Lambda = (K \ot L) \circ \Lambda = K \star L,
\end{align*}
wherein $\tau_{(14)}$ is defined by $a \ot b \ot c \ot d \mapsto d \ot b \ot c \ot a$. We used the $\beta$-invariance of $K$ (resp. $\beta^{-1}$-invariance of $L$) twice at the last step.
\end{proof}

It follows directly from this proposition that, for a hermitian, $\beta$-compatible, linear functional $L: \calB \ot \calB \to \cC$, the convolution exponential $\e_\star^{tL}$ is hermitian for every $t \in \rR$, too. We will need this in the following section.


\section{The Generator of an Additive Deformation}\label{secgenerator}
Let $(\calB, \Delta, \delta, \mu, \eins, \beta)$ be a braided bialgebra. Then we call a family $(\mu_t)_{t \in \rR}$ of $\beta$-compatible maps $\mu_t : \calB \ot \calB \to \calB$ an \emph{additive deformation}, if
\begin{itemize}
	\item $\mu_0 = \mu$,
	\item $\calB_t = (\calB, \mu_t, \eins, \beta)$ is a braided unital algebra for all $t \in \rR$,
	\item $\Delta \circ \mu_{t+s} = (\mu_t \ot \mu_s) \circ (\id \ot \beta \ot \id) \circ (\Delta \ot \Delta)$ for all $t, s \in \rR$,
	\item $\delta \circ \mu_t \xrightarrow[t \to 0]{} \delta \circ \mu_0 = \delta \ot \delta$ pointwise.
\end{itemize}
Assume $\calB$ is a braided $*$-bialgebra. Then we call $(\mu_t)_{t \in \rR}$ an \emph{additive $*$-deformation}, if in addition
\begin{itemize}
	\item $\mu_t(a^* \ot b^*) = \mu_t(b \ot a)^*$ for all $t \in \rR$,
\end{itemize}
i.e.\ $* \circ \mu_t = \mu_t \circ (* \ot *) \circ \tau$.
\begin{Rem}
The third condition states that the comultiplication $\Delta$ is a $*$-algebra homomorphism from $\calB_{t+s}$ into $\calB_t \ot \calB_s$, as the comultiplication on the bialgebra $\calB \ot \calB$ is defined by $\Lambda = (\id \ot \beta \ot \id) \circ (\Delta \ot \Delta)$.
\end{Rem}
\begin{Thm}
Suppose that $\calB$ is a braided bialgebra and $(\mu_t)_{t \in \rR}$ an additive deformation. Then there exists a $\beta$-compatible linear functional $L: \calB \ot \calB \to \cC$ given by
\begin{align*}
	L = \left. \frac{\rd}{\rd t} \delta \circ \mu_t \right|_{t=0} \equiv \lim_{t \to 0^+} \frac{1}{t} \left.(\delta\circ \mu_t - \delta \ot \delta \right)
\end{align*} 
pointwise. Furthermore, $L$ fulfils
\begin{itemize}
	\item[$(i)$] $\mu_t = \mu \star \e_\star^{tL}$,
	\item[$(ii)$] $L \star \mu = \mu \star L$,
	\item[$(iii)$] $L(\eins \ot \eins) = 0$,
	\item[$(iv)$] $\partial L = \delta \ot L - L \circ (\mu \ot \id) + L \circ (\id \ot \mu) - L \ot \delta = 0$.\footnote{Condition $(iv)$ is called cocycle property. The operator $\partial$ is the coboundary operator in the Hochschild cohomology associated to the $\cC$-$\cC$-bimodule structure on $\calB$, defined by $\alpha.b.\beta:=\alpha\beta b$ for $\alpha,\beta\in\cC$ and $b\in\calB$, see \cite{Wirth}.}
\end{itemize}
If $\calB$ is even a braided $*$-bialgebra and $(\mu_t)_{t \in \rR}$ an additive $*$-deformation, then we have additionally
\begin{itemize}
	\item[$(v)$] $\overline{L(a \ot b)} = L (b^* \ot a^*)$.
\end{itemize}

Conversely, suppose that the $\beta$-compatible linear functional $L: \calB \ot \calB \to \cC$ on a braided bialgebra $\calB$ fulfils conditions $(ii)$ to $(iv)$, then $(i)$ defines an additive deformation.
If $\calB$ is a braided $*$-bialgebra and $L$ satisfies additionally $(v)$, then $(i)$ defines an additive $*$-deformation.
\end{Thm}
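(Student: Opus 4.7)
I would structure the proof around the observation that $(\delta\circ\mu_t)_{t\in\rR}$ must be a pointwise continuous convolution semigroup on the coalgebra $(\calB\ot\calB,\Lambda,\delta\ot\delta)$. Composing the third defining condition of an additive deformation with $\delta\ot\delta$ on the left and using $(\delta\ot\delta)\circ\Delta=\delta$ gives the semigroup law $\delta\circ\mu_{t+s}=(\delta\circ\mu_t)\star(\delta\circ\mu_s)$, while the fourth defining condition supplies continuity at $t=0$. The standard generator theory for coalgebras (via the fundamental theorem for coalgebras) then produces a linear functional $L$ with the stated limit formula and $\delta\circ\mu_t=\e_\star^{tL}$. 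The $\beta$-compatibility of $L$ is inherited from that of $\mu_t$ and $\delta$ by passing to the limit.

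The properties (i)--(v) would then follow as direct consequences. Setting $t=0$ in the third condition (renaming $s$ as $t$) gives $\Delta\circ\mu_t=(\mu\ot\mu_t)\circ\Lambda$; applying $\id\ot\delta$ and using the counit axiom on the left produces (i). The symmetric choice $s=0$ with $\delta\ot\id$ gives $\mu_t=\e_\star^{tL}\star\mu$, and equating the two presentations and differentiating at $t=0$ yields (ii). Property (iii) is immediate, since $\mu_t(\eins\ot\eins)=\eins$ forces $\e_\star^{tL}(\eins\ot\eins)=1$ to be constant in $t$. Property (iv) follows by applying $\delta$ to the associativity identity for $\mu_t$ and differentiating at $t=0$ with the product rule, using $\delta\circ\mu=\delta\ot\delta$ to decouple the inner and outer occurrences of $\mu_t$. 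In the $*$-case, applying $\delta$ to $\mu_t(a^*\ot b^*)=\mu_t(b\ot a)^*$, using $\delta\circ *=\overline{\delta}$, and passing to the limit gives (v).

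For the converse direction I would define $\mu_t:=\mu\star\e_\star^{tL}$. Since $L$, $\mu$ and $\delta$ are $\beta$-compatible and convolution preserves $\beta$-compatibility (Remark~\ref{kompatibel}), so are $\e_\star^{tL}$ and $\mu_t$. Specializing (iv) to $b=\eins$ and to $c=\eins$ and invoking (iii) yields $L(a\ot\eins)=0=L(\eins\ot a)$; together with the $\beta$-compatibility relations $\beta(a\ot\eins)=\eins\ot a$ and $\beta(\eins\ot a)=a\ot\eins$ for the unit, this gives $\e_\star^{tL}(a\ot\eins)=\delta(a)=\e_\star^{tL}(\eins\ot a)$, so $\mu_t(a\ot\eins)=a=\mu_t(\eins\ot a)$. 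Pointwise continuity at $t=0$ is clear from $\delta\circ\mu_t=\e_\star^{tL}$. For the $*$-case, Proposition~\ref{Lhermit} applied inductively shows each $L^{\star n}$ is hermitian, so $\e_\star^{tL}$ is hermitian for all $t\in\rR$, and (i) transfers this to the required $*$-property of $\mu_t$.

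The hard part is to verify the associativity of $\mu_t$ and the third defining condition in the converse direction. Both reduce, via careful diagrammatic bookkeeping of the braidings $\beta_{m,n}$ and repeated use of the braided bialgebra condition, to the assertion that the convolution semigroup $\e_\star^{tL}$ satisfies a $2$-cocycle identity on the coalgebra $\calB\ot\calB$ inherited, by exponentiation, from the Hochschild cocycle property (iv) of its generator $L$; condition (ii) is what ensures the two presentations $\mu\star\e_\star^{tL}=\e_\star^{tL}\star\mu$ of $\mu_t$ are compatible and feeds into the derivation of the third condition. I expect this step to require the most work, since the braidings prevent a direct appeal to the classical (non-braided) deformation calculus of \cite{Wirth} and force each algebraic rearrangement to be tracked through the braid diagrams.
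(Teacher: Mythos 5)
Your proposal follows the paper's proof essentially step for step: the forward direction via the convolution semigroup $(\delta\circ\mu_t)_{t\in\rR}$, the identities $\mu_t=\mu\star(\delta\circ\mu_t)=(\delta\circ\mu_t)\star\mu$, and differentiation of the defining relations at $t=0$; the converse via $\mu_t:=\mu\star\e_\star^{tL}$, with Remark \ref{kompatibel} and Proposition \ref{Lhermit} playing exactly the roles you assign them. (Your derivation of $L(a\ot\eins)=0=L(\eins\ot a)$ from $(iii)$ and $(iv)$, and hence of full unitality $\mu_t(a\ot\eins)=a=\mu_t(\eins\ot a)$, is in fact more complete than the paper, which only records $\mu_t(\eins\ot\eins)=\eins$.) The one point where you stop short of a proof is the one you flag yourself: associativity of $\mu_t$ and the homomorphism property of $\Delta$. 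The paper confirms your predicted reduction is the right one: it first derives from $(iv)$ the exponentiated cocycle identity
\begin{align*}
\bigl(\e_{\star}^{tL}\circ(\id\ot\mu)\bigr)\star\bigl(\delta\ot\e_{\star}^{tL}\bigr)=\bigl(\e_{\star}^{tL}\circ(\mu\ot\id)\bigr)\star\bigl(\e_{\star}^{tL}\ot\delta\bigr),
\end{align*}
using that the two summands on each side of $L\circ(\id\ot\mu)+\delta\ot L=L\circ(\mu\ot\id)+L\ot\delta$ commute under convolution and that $\id\ot\mu$ and $\mu\ot\id$ are coalgebra morphisms; it then shows by a braid-diagram computation that $\mu_t\circ(\id\ot\mu_t)=\mu^{(3)}\star\bigl(\e_\star^{tL}\circ(\id\ot\mu)\bigr)\star\bigl(\delta\ot\e_\star^{tL}\bigr)$ and symmetrically for $\mu_t\circ(\mu_t\ot\id)$, and it obtains $(\mu_t\ot\mu_s)\circ\Lambda=\Delta\circ\mu_{t+s}$ by rewriting through $\Lambda^{(3)}$ and $\Lambda^{(4)}$ and using $(ii)$ to commute $\e_\star^{tL}$ past $\mu$ --- all as you anticipated. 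So your outline is correct and identical in strategy to the paper's; to count as a complete proof it would need those two diagrammatic computations actually carried out (that is where all the braiding bookkeeping lives), and the final $*$-step should be made explicit as an operator-valued rerun of the Proposition \ref{Lhermit} manipulation with $K$ replaced by $\mu$ and $L$ by $\e_\star^{tL}$, rather than being ``transferred via $(i)$''.
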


\begin{proof} (in the non-braided case due to J. \textsc{Wirth}, see \cite{Wirth})
Let $(\mu_t)_{t \in \ds R}$  be an additive deformation. It follows that 
\begin{align*}
	(\delta \circ \mu_t) \star (\delta \circ \mu_s) = (\delta \ot \delta) \circ (\mu_t \ot \mu_s) \circ \Lambda = (\delta \ot \delta) \circ \Delta \circ \mu_{t+s} = \delta \circ \mu_{t+s}.
\end{align*}
Thus $(\delta \circ \mu_t)_{t \in \ds R}$ is a continuous convolution semigroup,\footnote{The continuity is just the last condition in the definition of an additive deformation.} which implies that there exists a generator $L = \lim_{t \to 0} \frac{1}{t}\left.( \mu_t - \delta \ot \delta \right)$ with $\delta \circ \mu_t = \e_{\star}^{tL}$.
Moreover,
\begin{align*}
	\mu \star (\delta \circ \mu_t) = \bigl( \mu \ot (\delta \circ \mu_t) \bigr) \circ \Lambda = (\id \ot \delta) \circ (\mu \ot \mu_t) \circ \Lambda = (\id \ot \delta) \circ \Delta \circ \mu_t = \mu_t.
\end{align*}
Analogously, $(\delta \circ \mu_t) \star \mu = \mu_t$ holds. The differentiation of
\begin{align*}
	\e_\star^{tL} \star \mu = \mu \star \e_\star^{tL}, \quad \mu_t(\eins \ot \eins) = \eins, \quad \mu_t \circ (\mu_t \ot \id) = \mu_t \circ (\id \ot \mu_t)
\end{align*}
and $* \circ \mu_t = \mu_t \circ (* \ot *) \circ \tau$ (in the $*$-case) at $t = 0$ gives the properties $(ii)$ to $(v)$. The $\beta$-compatibility of $\mu_t$ implies the $\beta$-compatibility of $L$.

Conversely, assume $L: \calB \ot \calB \to \ds C$ fulfils $(ii)$ to $(iv)$. We want to show the associativity of $\mu_t:= \mu \star \e_{\star}^{tL}$.\footnote{Obviously, the multiplication $\mu_t$ is $\beta$-compatible due to remark \ref{kompatibel}.} First observe that\footnote{Equation (\ref{bew:asso}) results firstly from the fact that $L \circ (\id \ot \mu) + \delta \ot L = L \circ (\mu \ot \id) + L \ot \delta$ since the terms on the left and on the right side commute under the convolution, secondly from the fact that $(\id \ot \mu)$ and $(\mu \ot \id)$ are coalgebra homomorphisms and finally from the fact that $(\delta \ot K_1) \star (\delta \ot K_2) = \delta \ot (K_1 \star K_2)$.}
\begin{align} \label{bew:asso}
	\bigl(\e_{\star}^{tL} \circ (\id \ot \mu)\bigr) \star (\delta \ot \e_{\star}^{tL}) = \bigl( \e_{\star}^{tL} \circ (\mu \ot \id)\bigr) \star (\e_{\star}^{tL} \ot \delta).
\end{align}
Now we calculate $\mu_t \circ (\id \ot \mu_t)$ with braid diagrams, where $\bullet$ means $\e_\star^{tL}$. We get
\[
\xy
(-1,0)*{}; (27,0)*{} **\dir{-};
(-1,34)*{}; (27,34)*{} **\dir{-};
(2.5,34)*{}; (2.5,32)*{} **\dir{-};
(12.5,34)*{}; (12.5,32)*{} **\dir{-};
(22.5,34)*{}; (22.5,32)*{} **\dir{-};
(15,30)*{}; (15,29)*{} **\dir{-};
(20,30)*{}; (20,29)*{} **\dir{-};
(15,25)*{}; (15,24)*{} **\dir{-};
(20,25)*{}; (20,24)*{} **\dir{-};
(10,30)*{}; (10,24)*{} **\dir{-};
(25,30)*{}; (25,24)*{} **\dir{-};
(2.5,0)*{}; (2.5,2)*{} **\dir{-};
(0,4)*{}; (0,30)*{} **\dir{-};
(5,4)*{}; (5,5)*{} **\dir{-};
(10,4)*{}; (10,5)*{} **\dir{-};
(5,9)*{}; (5,30)*{} **\dir{-};
(10,9)*{}; (10,10)*{} **\dir{-};
(15,4)*{}; (15,10)*{} **\dir{-};
(12.5,12)*{}; (12.5,22)*{} **\dir{-};
(0,30)*{};(5,30)*{} **\crv{(0,32)&(2.5,32)&(5,32)};
(10,30)*{};(15,30)*{} **\crv{(10,32)&(12.5,32)&(15,32)};
(20,30)*{};(25,30)*{} **\crv{(20,32)&(22.5,32)&(25,32)};
(10,24)*{};(15,24)*{} **\crv{(10,22)&(12.5,22)&(15,22)};
(20,24)*{};(25,24)*{} **\crv{(20,22)&(22.5,22)&(25,22)};
(0,4)*{};(5,4)*{} **\crv{(0,2)&(2.5,2)&(5,2)};
(10,10)*{};(15,10)*{} **\crv{(10,12)&(12.5,12)&(15,12)};
(10,4)*{};(15,4)*{} **\crv{(10,2)&(12.5,2)&(15,2)};
(2.5,31)*{\cdot};
(12.5,31)*{\cdot};
(22.5,31)*{\cdot};
(12.5,23)*{\cdot};
(2.5,3)*{\cdot};
(12.5,11)*{\cdot};
\vcross~{(15,29)}{(20,29)}{(15,25)}{(20,25)};
\vtwist~{(5,5)}{(10,5)}{(5,9)}{(10,9)};
(22.5,21.9)*{\bullet};
(12.5,1.9)*{\bullet};
\endxy
\hspace*{0.35em}
\xy
(10,17)*{=};
(10,0)*{};
\endxy
\hspace*{0.35em}
\xy
(-1,0)*{}; (31,0)*{} **\dir{-};
(-1,34)*{}; (31,34)*{} **\dir{-};
(2.5,0)*{}; (2.5,2)*{} **\dir{-};
(0,4)*{}; (0,30)*{} **\dir{-};
(5,4)*{}; (5,5)*{} **\dir{-};
(5,9)*{}; (5,30)*{} **\dir{-};
(2.5,32)*{}; (2.5,34)*{} **\dir{-};
(10,4)*{}; (10,5)*{} **\dir{-};
(10,9)*{}; (10,12)*{} **\dir{-};
(7.5,14)*{}; (7.5,20)*{} **\dir{-};
(12.5,14)*{}; (12.5,15)*{} **\dir{-};
(12.5,19)*{}; (12.5,20)*{} **\dir{-};
(10,22)*{}; (10,30)*{} **\dir{-};
(15,32)*{}; (15,34)*{} **\dir{-};
(20,4)*{}; (20,12)*{} **\dir{-};
(17.5,14)*{}; (17.5,15)*{} **\dir{-};
(17.5,19)*{}; (17.5,20)*{} **\dir{-};
(22.5,14)*{}; (22.5,20)*{} **\dir{-};
(20,22)*{}; (20,25)*{} **\dir{-};
(20,29)*{}; (20,30)*{} **\dir{-};
(25,24)*{}; (25,25)*{} **\dir{-};
(25,29)*{}; (25,30)*{} **\dir{-};
(27.5,32)*{}; (27.5,34)*{} **\dir{-};
(30,24)*{}; (30,30)*{} **\dir{-};
(0,4)*{};(5,4)*{} **\crv{(0,2)&(2.5,2)&(5,2)};
(0,30)*{};(5,30)*{} **\crv{(0,32)&(2.5,32)&(5,32)};
(7.5,14)*{};(12.5,14)*{} **\crv{(7.5,12)&(10,12)&(12.5,12)};
(7.5,20)*{};(12.5,20)*{} **\crv{(7.5,22)&(10,22)&(12.5,22)};
(10,4)*{}; (20,4)*{} **\crv{(10,2)&(15,2)&(20,2)};
(10,30)*{};(20,30)*{} **\crv{(10,32)&(15,32)&(20,32)};
(17.5,14)*{};(22.5,14)*{} **\crv{(17.5,12)&(20,12)&(22.5,12)};
(17.5,20)*{};(22.5,20)*{} **\crv{(17.5,22)&(20,22)&(22.5,22)};
(25,24)*{};(30,24)*{} **\crv{(25,22)&(27.5,22)&(30,22)};
(25,30)*{};(30,30)*{} **\crv{(25,32)&(27.5,32)&(30,32)};
(2.5,3)*{\cdot};
(2.5,31)*{\cdot};
(10,13)*{\cdot};
(10,21)*{\cdot};
(15,31)*{\cdot};
(20,13)*{\cdot};
(20,21)*{\cdot};
(27.5,31)*{\cdot};
\vtwist~{(5,5)}{(10,5)}{(5,9)}{(10,9)};
\vtwist~{(12.5,15)}{(17.5,15)}{(12.5,19)}{(17.5,19)};
\vtwist~{(20,25)}{(25,25)}{(20,29)}{(25,29)};
(15,1.9)*{\bullet};
(27.5,21.9)*{\bullet};
\endxy
\hspace*{0.35em}
\xy
(10,17)*{=};
(10,0)*{};
\endxy
\hspace*{0.35em}
\xy
(-1,0)*{}; (41,0)*{} **\dir{-};
(-1,34)*{}; (41,34)*{} **\dir{-};
(5,34)*{}; (5,32)*{} **\dir{-};
(5,0)*{}; (5,2)*{} **\dir{-};
(0,4)*{}; (0,30)*{} **\dir{-};
(20,34)*{}; (20,25)*{} **\dir{-};
(35,34)*{}; (35,25)*{} **\dir{-};
(25,30)*{}; (25,29)*{} **\dir{-};
(30,30)*{}; (30,29)*{} **\dir{-};
(10,30)*{}; (10,28)*{} **\dir{-};
(35,21)*{}; (35,20)*{} **\dir{-};
(40,30)*{}; (40,20)*{} **\dir{-};
(30,21)*{}; (30,20)*{} **\dir{-};
(25,21)*{}; (25,20)*{} **\dir{-};
(20,17)*{}; (20,16)*{} **\dir{-};
(27.5,18)*{}; (27.5,16)*{} **\dir{-};
(15,25)*{}; (15,30)*{} **\dir{-};
(5,25)*{}; (5,32)*{} **\dir{-};
(5,2)*{}; (5,21)*{} **\dir{-};
(15,16)*{}; (15,17)*{} **\dir{-};
(10,4)*{}; (10,12)*{} **\dir{-};
(0,30)*{};(10,30)*{} **\crv{(0,32)&(5,32)&(10,32)};
(0,4)*{};(10,4)*{} **\crv{(0,2)&(5,2)&(10,2)};
(15,30)*{};(25,30)*{} **\crv{(15,32)&(20,32)&(25,32)};
(30,30)*{};(40,30)*{} **\crv{(30,32)&(35,32)&(40,32)};
(35,20)*{};(40,20)*{} **\crv{(35,18)&(37.5,18)&(40,18)};
(25,20)*{};(30,20)*{} **\crv{(25,18)&(27.5,18)&(30,18)};
(20,16)*{};(27.5,16)*{} **\crv{(20,14)&(23.25,14)&(27,14)};
(10,12)*{};(15,16)*{} **\crv{(10,14)&(12.5,14)&(15,14)};
\vcross~{(25,29)}{(30,29)}{(25,25)}{(30,25)};
\vcross~{(20,25)}{(25,25)}{(20,21)}{(25,21)};
\vcross~{(30,25)}{(35,25)}{(30,21)}{(35,21)};
\vcross~{(15,21)}{(20,21)}{(15,17)}{(20,17)};
\vtwist~{(5,21)}{(15,21)}{(5,25)}{(15,25)};
(21,31)*{\cdot};
(36,31)*{\cdot};
(27.5,19)*{\cdot};
(6,31)*{\cdot};
(4,3)*{\cdot};
(34,31)*{\cdot};
(19,31)*{\cdot};
(4,31)*{\cdot};
(6,3)*{\cdot};
(10,27.25)*{\circ};
(37.5,17.9)*{\bullet};
(23.75,13.9)*{\bullet};
\endxy
\]
The last diagram is equal to $\mu^{(3)} \star \left( \e_\star^{tL} \circ (\id \ot \mu) \right) \star \left(\delta\ot  \e_\star^{tL} \right)$. In the same manner, we get $\mu_t \circ (\mu_t \ot \id)=\mu^{(3)} \star \left( \e_\star^{tL} \circ (\mu \ot \id) \right) \star \left(\e_\star^{tL} \ot \delta \right)$. Now associativity of $\mu_t$ follows from (\ref{bew:asso}). We also have
\begin{align*}
	\mu_t (\eins \ot \eins) = \mu \star \e_\star^{tL} (\eins \ot \eins) = \e_\star^{tL(\eins \ot \eins)} \mu (\eins \ot \eins) = \eins
\end{align*}
for all $t \in \rR$ and, obviously, $\mu_0 = \mu$ is fulfilled. Now we prove that $\Delta:\calB_{t+s}\to\calB_t\ot\calB_s$ is an algebra homomorphism
\begin{align*}
	(\mu_t \ot \mu_s) \circ \Lambda &= (\e_\star^{tL} \ot \mu \ot \mu \ot \e_\star^{sL}) \circ \Lambda^{(4)} \\
	&= (\e_\star^{tL} \ot \mu \ot \mu \ot \e_\star^{sL}) \circ (\id\ot\id \ot \Lambda \ot \id\ot\id) \circ \Lambda^{(3)} \displaybreak[0]\\
	&= \Delta \circ (\e_\star^{tL} \ot \mu \ot \e_\star^{sL}) \circ \Lambda^{(3)} \displaybreak[0]\\
	&= \Delta \circ (\mu \ot \e_\star^{tL} \ot \e_\star^{sL}) \circ \Lambda^{(3)} \displaybreak[0]\\
	&= \Delta \circ (\mu \ot \e_\star^{(t+s)L}) \circ \Lambda \\
	&= \Delta \circ \mu_{(t+s)}.
\end{align*}
At last we need the implication: If $L$ is hermitian, $\calB_t$ becomes a $*$-algebra, i.e. $\mu_t(a \ot b) = \mu_t(b^* \ot a^*)^*$:
\begin{align*}
	&* \circ \, \mu_t \circ (* \ot *) \circ \tau\\
	&\quad= * \circ (\mu \star \e_\star^{tL}) \circ (* \ot *) \circ \tau \displaybreak[0]\\
	&\quad= * \circ (\mu \ot \e_\star^{tL}) \circ (\id \ot \beta \ot \id) \circ (\Delta \ot \Delta) \circ (* \ot *) \circ \tau \displaybreak[0]\\
	&\quad= (\mu \ot \e_\star^{tL}) \circ (*\ot*\ot*\ot*) \circ (\tau\ot\tau) \circ (\id \ot \beta \ot \id) \circ (\Delta \ot \Delta) \circ (* \ot *) \circ \tau
\end{align*}
This last expression is the same as line three in the proof of proposition \ref{Lhermit} with $K$ replaced by $\mu$ and $L$ replaced by $e_\star^{tL}$. The same manipulations can be performed and we arrive at
\begin{align*}
  (\mu \ot \e_\star^{tL}) \circ \tau_{2,2} \circ \beta_{2,2}^{-1} \circ \Lambda = (\e_\star^{tL} \ot \mu) \circ \beta_{2,2}^{-1} \circ \Lambda = (\mu \ot \e_\star^{tL}) \circ \Lambda = \mu_t,
\end{align*}
using the  $\beta^{-1}$-invariance of $L$.
\end{proof}


\section{Hopf-Deformations}\label{secHopf}

In this section, we want to show the existence of deformed antipodes on braided Hopf ($*$-)algebras and explore their properties. 

Let $\bigl(\calH, \Delta, \delta, \mu, \eins, S, \beta, (*)\bigr)$ be a Hopf ($*$-)algebra.
If we have an additive deformation $(\mu_t)_{t \in \rR}$ with generator $L$, the equation
\begin{align}\label{eq:sigma}
	L \circ (\id \ot S) \circ \Delta = L \circ (S \ot \id) \circ \Delta
\end{align}
holds because of
\begin{align*}
	0 &= \partial L \left( a_{(1)} \ot S(a_{(2)}) \ot a_{(3)} \right) \\
	&= L \left( S(a_{(1)}) \ot a_{(2)} \right) - \underbrace{L(\eins \ot a)}_{=0} + \underbrace{L(a \ot \eins)}_{=0} - L \left( a_{(1)} \ot S(a_{(2)}) \right).
\end{align*}

\begin{Lem}\label{Lem:muK}
Let $K$ be a $\beta$-invariant linear functional on $\calH \ot \calH$ and $\widetilde{K} := K \circ (S \ot \id) \circ \Delta$. Then
\begin{align*}
	K \star \mu = \mu \star K \quad \text{implies} \quad \widetilde{K}\star \id = \id \star \widetilde{K}.
\end{align*}
\end{Lem}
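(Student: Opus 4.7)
The plan is to evaluate the hypothesis $K \star \mu = \mu \star K$ at the element $(S \ot \id) \circ \Delta(a)$, simplify each side using the braided Hopf structure, and then derive the conclusion by a convolution manoeuvre in $\Hom(\calH,\calH)$.

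First I would unfold $\Lambda \circ (S \ot \id) \circ \Delta$: combining $\Lambda = (\id \ot \beta \ot \id) \circ (\Delta \ot \Delta)$ with the antipode identity $\Delta \circ S = \beta \circ (S \ot S) \circ \Delta$ yields
\begin{align*}
  \Lambda \circ (S \ot \id) \circ \Delta \;=\; (\beta_{1,2} \ot \id) \circ (S \ot S \ot \id \ot \id) \circ \Delta^{(4)},
\end{align*}
with $\beta_{1,2} = (\id \ot \beta) \circ (\beta \ot \id)$. For the left-hand side $(K \star \mu) \circ (S \ot \id) \circ \Delta$, I would compose this with $K \ot \mu$ and invoke the $\beta$-invariance of $K$, which specialises to $(K \ot \id) \circ \beta_{1,2} = \id \ot K$, reducing the composite to
\begin{align*}
  (K \ot \mu)(\beta_{1,2} \ot \id)(v_1 \ot v_2 \ot v_3 \ot v_4) \;=\; K(v_2 \ot v_3)\,v_1 v_4.
\end{align*}
Substituting $(v_1,v_2,v_3,v_4) = (S(a_{(1)}), S(a_{(2)}), a_{(3)}, a_{(4)})$ from $\Delta^{(4)}(a)$ and recognising $a_{(2)} \ot a_{(3)}$ as $\Delta(a_{(2)})$ in three-fold Sweedler notation gives the expression $S(a_{(1)})\,\widetilde{K}(a_{(2)})\,a_{(3)}$.

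For the right-hand side $(\mu \star K) \circ (S \ot \id) \circ \Delta$, the same braid sits under $\mu \ot K$ instead. The $\beta$-invariance of $\mu$, $(\mu \ot \id) \circ \beta_{1,2} = \beta \circ (\id \ot \mu)$, lets me first multiply the inner two strands, giving
\begin{align*}
  (\mu \ot K)(\beta_{1,2} \ot \id)(v_1 \ot v_2 \ot v_3 \ot v_4) \;=\; (\id \ot K)(\beta \ot \id)(v_1 \ot v_2 v_3 \ot v_4).
\end{align*}
With the same substitution, the inner product $S(a_{(2)})\,a_{(3)}$ collapses to $\eins\,\delta(a_{(2)})$ by the antipode axiom $\mu \circ (S \ot \id) \circ \Delta = \eins\,\delta$; the $\beta$-compatibility of the unit $\beta(v \ot \eins) = \eins \ot v$ moves the resulting $\eins$ past the $K$-strand; and coassociativity absorbs $\delta(a_{(2)})$ into the arguments of $K$, leaving $\eins\,\widetilde{K}(a)$.

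Equating the two computations produces the identity $S \star \widetilde{K} \star \id = \widetilde{K}$ in $\Hom(\calH,\calH)$, where the scalar-valued $\widetilde{K}$ is identified with the map $a \mapsto \widetilde{K}(a)\,\eins$. Since $\id \star S = \eins\,\delta$ is the unit of this convolution algebra, convolving on the left with $\id$ and using associativity will give
\begin{align*}
  \widetilde{K} \star \id \;=\; (\id \star S) \star \widetilde{K} \star \id \;=\; \id \star (S \star \widetilde{K} \star \id) \;=\; \id \star \widetilde{K},
\end{align*}
which is the claim. The main obstacle is checking that the two independent braidings --- one from $\Lambda$, the other from $\Delta \circ S$ --- combine into exactly $\beta_{1,2}$, which is precisely the braid that the $\beta$-invariance of $K$ is designed to absorb; this is why $\beta$-invariance alone (rather than $\beta^{-1}$-invariance) of $K$ suffices.
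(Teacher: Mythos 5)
Your proposal is correct and follows essentially the same route as the paper's proof: the same decomposition $\Lambda\circ(S\ot\id)\circ\Delta=(\beta_{1,2}\ot\id)\circ(S\ot S\ot\id\ot\id)\circ\Delta^{(4)}$, the same use of $\beta$-invariance of $K$ on one side and of $\mu$ and $\eins$ plus the antipode axiom on the other, arriving at $S\star\widetilde{K}\star\id=\widetilde{K}$. Your final step (left-convolving with $\id$ and using $\id\star S=\eins\delta$ together with associativity) is exactly the paper's concluding Sweedler computation, just phrased abstractly.
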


\begin{proof}
First we use $\Delta \circ S = \beta \circ ( S\ot S )\circ \Delta$ in order to get
\begin{align*}
	\Lambda \circ (S\ot \id ) \circ \Delta &=( \id \ot \beta \ot \id )\circ (\Delta \ot \Delta ) \circ ( S\ot \id ) \circ \Delta \\
	&=( \id \ot \beta \ot \id )\circ (\beta \ot \id \ot \id )\circ (S\ot S \ot \id \ot \id )\circ \Delta^{(4)}\\
	&=(\beta_{1,2}\ot \id ) \circ (S\ot S \ot \id \ot \id )\circ \Delta^{(4)}.
\end{align*}
This allows us to calculate
\begin{align*}
	(K\star \mu ) \circ ( S\ot \id ) \circ \Delta &=(K \ot \mu ) \circ (\beta_{1,2}\ot \id ) \circ (S\ot S \ot \id \ot \id )\circ \Delta^{(4)}\\
	&=\mu \circ (S \ot \widetilde{K} \ot \id ) \circ \Delta^{(3)}
\end{align*}
using $\beta$-invariance of $K$. Next we get
\begin{align*}
	(\mu\star K ) \circ ( S\ot \id ) \circ \Delta &=(\mu \ot K ) \circ (\beta_{1,2}\ot \id ) \circ (S\ot S \ot \id \ot \id )\circ \Delta^{(4)}\\
	&=(\id \ot K) \circ (\beta\ot \id ) \circ (\id\ot \mu \ot \id) \circ (S \ot S \ot \id \ot \id ) \circ \Delta^{(4)}\\
	&=(\id \ot K) \circ (\beta\ot \id ) \circ (S \ot \eins \ot \id ) \circ \Delta=\eins \circ \widetilde{K}
\end{align*}
using $\beta$ invariance of $\mu$ and $\eins$ as well as the antipode equation.
Combining these two equations, it follows from $K \star \mu = \mu \star K$ that
\begin{align*}
	\eins \circ \widetilde{K}=(\mu\star K ) \circ ( S\ot \id ) \circ \Delta =( K \star \mu ) \circ ( S\ot \id ) \circ \Delta
	= \mu \circ (S \ot \widetilde{K} \ot \id ) \circ \Delta^{(3)},
\end{align*}
or in Sweedler notation (suppressing the unit)
\begin{align*}
	\widetilde{K}(a) = S(a_{(1)}) \widetilde{K}(a_{(2)}) a_{(3)}.
\end{align*}
Now it follows easily that
\begin{align*}
	(\id \star \widetilde{K} )(a)= a_{(1)} \widetilde{K}(a_{(2)}) =a_{(1)} \bigl(S(a_{(2)}) \widetilde{K}(a_{(3)}) a_{(4)}\bigr)
	= \widetilde{K}(a_{(1)}) a_{(2)}=(\widetilde{K} \star \id )(a)
\end{align*}
for all $a \in \calH$.
\end{proof}

\begin{Cor}
The family $F_t := \e_\star^{tL} \circ (S \ot \id) \circ \Delta$ is a continuous convolution semigroup and
\begin{align*}
	\e_\star^{tL} \circ (S \ot \id) \circ \Delta = \e_\star^{tL} \circ (\id \ot S) \circ \Delta = \e_\star^{t \sigma}
\end{align*}
with $\sigma := L \circ (S \ot \id) \circ \Delta$.
\end{Cor}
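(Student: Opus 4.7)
The plan is to show that $F_t$ is a continuous convolution semigroup of linear functionals on $\calH$ with generator $\sigma$, conclude $F_t = e_\star^{t\sigma}$ by uniqueness, and then observe that the analogous argument with $(\id \ot S)$ in place of $(S \ot \id)$ produces $G_t := e_\star^{tL} \circ (\id \ot S) \circ \Delta$, which by~\eqref{eq:sigma} has the same generator $\sigma$ and hence equals $e_\star^{t\sigma}$ as well; this gives the chain $F_t = e_\star^{t\sigma} = G_t$.

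First I would apply Lemma~\ref{Lem:muK} to $K := e_\star^{tL}$. Property~$(ii)$ of the preceding Theorem gives $L \star \mu = \mu \star L$; by induction $L^{\star n} \star \mu = \mu \star L^{\star n}$, hence $e_\star^{tL} \star \mu = \mu \star e_\star^{tL}$. Since $L$ is $\beta$-compatible, $e_\star^{tL}$ is in particular $\beta$-invariant, so the lemma yields $F_t \star \id = \id \star F_t$ as endomorphisms of $\calH$, i.e.\ $F_t(c_{(1)})\,c_{(2)} = F_t(c_{(2)})\,c_{(1)}$. Pairing with an arbitrary linear functional $\varphi$ on $\calH$ then gives $F_t \star \varphi = \varphi \star F_t$, so $F_t$ lies in the centre of the convolution algebra of linear functionals on $\calH$. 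Pointwise continuity in $t$ is immediate from the convergent power series defining $e_\star^{tL}$; one checks $F_0 = (\delta \ot \delta) \circ (S \ot \id) \circ \Delta = \delta$ using $\delta \circ S = \delta$ and the counit axiom, and term-by-term differentiation gives $\partial_t F_t|_{t=0} = L \circ (S \ot \id) \circ \Delta = \sigma$.

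The main and most delicate step is the semigroup identity $F_{t+s} = F_t \star F_s$. Writing $e_\star^{(t+s)L} = e_\star^{tL} \star e_\star^{sL}$ with convolution on $\calH \ot \calH$ and invoking the identity
\[
	\Lambda \circ (S \ot \id) \circ \Delta = (\beta_{1,2} \ot \id) \circ (S \ot S \ot \id \ot \id) \circ \Delta^{(4)}
\]
already established inside the proof of Lemma~\ref{Lem:muK}, one obtains
\[
	F_{t+s} = (e_\star^{tL} \ot e_\star^{sL}) \circ (\beta_{1,2} \ot \id) \circ (S \ot S \ot \id \ot \id) \circ \Delta^{(4)},
\]
whereas on the other hand
\[
	F_t \star F_s = (e_\star^{tL} \ot e_\star^{sL}) \circ (S \ot \id \ot S \ot \id) \circ \Delta^{(4)}.
\]
Comparing these two expressions requires the $\beta$-invariance of $e_\star^{tL}$ to absorb the factor $\beta_{1,2} \ot \id$, and then the antipode axioms together with the centrality established above to reconcile the resulting Sweedler factors. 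I expect this diagram chase, in which the braiding interacts nontrivially with both the coproduct and the antipode, to be the main obstacle.

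Once the semigroup property is in hand, uniqueness of convolution semigroups with a prescribed generator (the fundamental theorem for coalgebras cited earlier in the paper) yields $F_t = e_\star^{t\sigma}$. Replacing $(S \ot \id)$ by $(\id \ot S)$ throughout the argument shows that $G_t$ is likewise a continuous convolution semigroup with generator $L \circ (\id \ot S) \circ \Delta = \sigma$ by~\eqref{eq:sigma}, so $G_t = e_\star^{t\sigma} = F_t$, as claimed.
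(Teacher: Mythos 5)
Your setup follows the paper's strategy, but the proof as written has a genuine gap: the semigroup identity $F_{t+s}=F_t\star F_s$ --- the actual content of the corollary --- is set up but not proved. You correctly reduce the two sides to
\begin{align*}
F_{t+s}&=(\e_\star^{tL}\ot\e_\star^{sL})\circ(\beta_{1,2}\ot\id)\circ(S\ot S\ot\id\ot\id)\circ\Delta^{(4)},\\
F_t\star F_s&=(\e_\star^{tL}\ot\e_\star^{sL})\circ(S\ot\id\ot S\ot\id)\circ\Delta^{(4)},
\end{align*}
but then only announce that a diagram chase using $\beta$-invariance, the antipode axioms and centrality should reconcile them, and you yourself flag this as the expected main obstacle. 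After absorbing $\beta_{1,2}$ by the $\beta$-invariance of $\e_\star^{tL}$, the first expression reads $\e_\star^{tL}\bigl(S(a_{(2)})\ot a_{(3)}\bigr)\,\e_\star^{sL}\bigl(S(a_{(1)})\ot a_{(4)}\bigr)$, while the second is $\e_\star^{tL}\bigl(S(a_{(1)})\ot a_{(2)}\bigr)\,\e_\star^{sL}\bigl(S(a_{(3)})\ot a_{(4)}\bigr)$; no antipode axiom identifies these, so the deferred step is the proof, not a routine verification.

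The paper closes exactly this gap with the tool you derived but did not deploy in the computation: the $\calH$-valued relation $\id\star F_s=F_s\star\id$ from Lemma \ref{Lem:muK} (note that the weaker consequence you record, namely that $F_s$ commutes with all linear functionals under convolution, is not the form that is needed; it is the identity $a_{(1)}F_s(a_{(2)})=F_s(a_{(1)})a_{(2)}$ that lets $F_s$ migrate between coproduct legs). Concretely, the paper writes $F_t\star F_s=\e_\star^{tL}\circ(S\ot\id\ot F_s)\circ\Delta^{(3)}$, uses that relation to pass to $\e_\star^{tL}\circ(S\ot F_s\ot\id)\circ\Delta^{(3)}$, expands $F_s$, and then applies $(S\ot S)\circ\Delta=\beta^{-1}\circ\Delta\circ S$ together with the $\beta$-invariance of $\e_\star^{sL}$ to reassemble everything into $(\e_\star^{sL}\ot\e_\star^{tL})\circ\Lambda\circ(S\ot\id)\circ\Delta=\e_\star^{(s+t)L}\circ(S\ot\id)\circ\Delta=F_{t+s}$. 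The remaining parts of your argument ($F_0=\delta$, the computation of the generator, and the identification of the two semigroups via (\ref{eq:sigma})) are correct and match the paper.
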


\begin{proof}
The continuous convolution semigroup $e_\star^{tL}$ fulfils $\e_\star^{tL} \star \mu = \mu \star \e_\star^{tL}$. Because of lemma \ref{Lem:muK}, we have $F_t \star \id = \id \star F_t$, so we get
\begin{align*}
	F_t \star F_s &= (F_t \ot F_s) \circ \Delta = \e_\star^{tL} \circ (S \ot \id \ot F_s) \circ \Delta^{(3)} \\
	&= \e_\star^{tL} \circ (S \ot F_s \ot \id) \circ \Delta^{(3)} \displaybreak[0]\\
	&= \e_\star^{tL} \circ (\id \ot \e_\star^{sL} \ot \id) \circ (S \ot S \ot \id \ot \id) \circ \Delta^{(4)} \displaybreak[0]\\
	&= \e_\star^{tL} \circ (\id \ot \e_\star^{sL} \ot \id) \circ (\beta^{-1} \ot \id \ot \id) \circ (\Delta \ot \Delta) \circ (S \ot \id) \circ \Delta \displaybreak[0]\\
	&= \e_\star^{tL} \circ (\e_\star^{sL} \ot \id \ot \id) \circ (\beta_{1,2} \ot \id) \circ (\beta^{-1} \ot \id \ot \id) \circ (\Delta \ot \Delta) \circ (S \ot \id) \circ \Delta \displaybreak[0]\\
	&= \e_\star^{tL} \circ (\e_\star^{sL} \ot \id \ot \id) \circ (\id \ot \beta \ot \id) \circ (\Delta \ot \Delta) \circ (S \ot \id) \circ \Delta \displaybreak[0]\\
	&= (\e_\star^{sL} \ot \e_\star^{tL}) \circ \Lambda \circ (S \ot \id) \circ \Delta \\
	&= \e_\star^{(t+s)L} \circ (S \ot \id) \circ \Delta = F_{t+s}.
\end{align*}
The continuity of $F_t$ follows from the continuity of $\e_\star^{tL}$ and differentiating gives us the generator $\sigma = L \circ (S \ot \id) \circ \Delta$.

Analogously one concludes that the functionals $\e_\star^{tL} \circ (\id \ot S) \circ \Delta$ constitute a continuous convolution semigroup with generator $L\circ(\id \ot S)\circ\Delta$. But this equals $\sigma$ due to (\ref{eq:sigma}).
\end{proof}

\begin{Thm}
Every additive deformation on a braided Hopf algebra provides a family of deformed antipodes $(S_t)_{t \in \rR}$ with
\begin{align*}
	S_t = S \star \e_{\star}^{-t\sigma},
\end{align*}
where $\sigma=L \circ (\id \ot S)\circ \Delta$.
\end{Thm}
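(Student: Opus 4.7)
My plan is to verify that the proposed formula $S_t := S \star \e_\star^{-t\sigma}$ furnishes a two-sided convolution inverse of the identity in $\Hom(\calH,\calH)$ with respect to the convolution $\star_t$ built from $\mu_t$ (while $\star$ denotes the convolution built from $\mu$). Uniqueness of convolution inverses then yields both the existence claim and the stated formula.

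I would reduce the whole proof to the single identity
\[
\mu_t \circ (\id \ot S) \circ \Delta \;=\; \eins \cdot \e_\star^{t\sigma} \;=\; \mu_t \circ (S \ot \id) \circ \Delta. \qquad (\star)
\]
Granted $(\star)$, the rest is bookkeeping. Writing $\alpha := \e_\star^{-t\sigma}$, a direct Sweedler calculation (using only coassociativity and the fact that $\alpha$ is scalar valued) gives, for all $f,g \in \Hom(\calH,\calH)$, the distributivities
\[
\id \star_t (f \star \alpha) = (\id \star_t f) \star \alpha, \quad (f \star \alpha) \star_t g = f \star_t (\alpha \star g), \quad f \star_t (g \star \alpha) = (f \star_t g) \star \alpha.
\]
Combined with the $\star$-commutativity $\alpha \star \id = \id \star \alpha$ supplied by the preceding Corollary (itself an instance of Lemma~\ref{Lem:muK} applied to $K = \e_\star^{tL}$), one obtains
\begin{align*}
\id \star_t S_t &= (\id \star_t S) \star \alpha \stackrel{(\star)}{=} (\eins\, \e_\star^{t\sigma}) \star \e_\star^{-t\sigma} = \eins \cdot \delta, \\
S_t \star_t \id &= (S \star \alpha) \star_t \id = S \star_t (\alpha \star \id) = S \star_t (\id \star \alpha) = (S \star_t \id) \star \alpha \stackrel{(\star)}{=} \eins \cdot \delta.
\end{align*}

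The substantive step is to prove $(\star)$. I expand $\mu_t = \mu \star \e_\star^{tL}$ in its definition, so that
\[
\mu_t \circ (\id \ot S) \circ \Delta \;=\; \mu \circ (\mu \ot \eins \e_\star^{tL}) \circ \Lambda \circ (\id \ot S) \circ \Delta.
\]
Using $\Delta \circ S = \beta \circ (S \ot S) \circ \Delta = (S \ot S) \circ \beta \circ \Delta$ together with the $\beta$-compatibility identities $(S \ot \id)\circ\beta = \beta\circ(\id \ot S)$ and $(\id \ot S)\circ\beta = \beta\circ(S \ot \id)$, both $S$-factors can be moved to the outermost legs:
\[
\Lambda \circ (\id \ot S) \circ \Delta \;=\; (\id \ot S \ot \id \ot S) \circ (\id \ot \beta \ot \id) \circ (\id \ot \id \ot \beta) \circ \Delta^{(4)}.
\]
Applying $(\mu \ot \eins \e_\star^{tL})$ followed by $\mu$ and regrouping, the outer $\mu$-pair of legs feeds through the antipode equation $\mu \circ (\id \ot S) \circ \Delta = \eins\delta$ and the inner $\e_\star^{tL}$-pair through the Corollary's identity $\e_\star^{tL} \circ (\id \ot S) \circ \Delta = \e_\star^{t\sigma}$, producing $\eins \cdot \e_\star^{t\sigma}$. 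The mirror argument, using $L \circ (S \ot \id) \circ \Delta = \sigma$ instead, gives the second equality in $(\star)$.

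The main obstacle is precisely this regrouping step. After expansion, two stacked braidings (one coming from $\Lambda$, one from $\Delta \circ S$) entangle the four legs of $\Delta^{(4)}$, and matching them to the two ``antipode-equation'' pairs requires repeatedly commuting $S$-factors through braidings via $\beta$-compatibility and exploiting the braid equation. The manipulation is considerably more transparent in braid-diagram form, where it amounts to an isotopy of the diagram into the picture obtained by stacking the $\mu$-antipode diagram on top of the generator diagram from the Corollary.
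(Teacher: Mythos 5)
Your proposal is correct, and its diagrammatic core is the same as the paper's; what differs is the bookkeeping around the scalar factor. The paper substitutes $S_t=S\star F_{-t}$ directly into $\mu_t\circ(\id\ot S_t)\circ\Delta$ and pushes the whole expression through one chain of braid manipulations, ending at $\eins\circ(F_t\ot F_{-t})\circ\Delta=\eins\delta$; you instead isolate the identity $(\star)$, namely $\mu_t\circ(\id\ot S)\circ\Delta=\eins\,\e_\star^{t\sigma}=\mu_t\circ(S\ot\id)\circ\Delta$, and then cancel $\e_\star^{-t\sigma}$ by convolution algebra. Note that the second equality of $(\star)$ is already proved in the paper: it is the chain $(\mu\star K)\circ(S\ot\id)\circ\Delta=\eins\circ\widetilde{K}$ from the proof of Lemma~\ref{Lem:muK} with $K=\e_\star^{tL}$, combined with the Corollary's identification $F_t=\e_\star^{t\sigma}$; the first equality is the mirror computation, which is exactly the paper's proof of the theorem with the fifth leg $F_{-t}$ replaced by $\delta$. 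Your distributivity identities and the commutation $\alpha\star\id=\id\star\alpha$ (correctly traced back to Lemma~\ref{Lem:muK} applied to $\e_\star^{-tL}$) are sound, and your route buys something the paper leaves unstated: the verification that $S_t$ is a \emph{left} convolution inverse for $\star_t$ as well, not only a right one. One caution on your sketch of $(\star)$: after moving the antipodes onto legs $2$ and $4$, the pair $(1,2)$ is still entangled with legs $3$ and $4$ by the two stacked braidings, so the antipode equation cannot be fed that pair directly; the step that actually closes the argument is pulling the functional sitting on legs $(3,4)$ back through $\beta_{2,1}$ using its $\beta^{-1}$-invariance (respectively, in the paper's grouping, pulling $\mu$ back using its $\beta$-compatibility), after which one lands on $a_{(1)}F_t(a_{(2)})S(a_{(3)})=(\id\star F_t\star S)(a)=\eins F_t(a)$ by Lemma~\ref{Lem:muK} and the antipode equation. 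You flag this regrouping as the main obstacle, and that is indeed where all the work sits; the tools you name suffice to carry it out.
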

\begin{proof}
For $F_t=\e_{\star}^{tL}\circ(\id \ot S)\circ \Delta$, we get
\begin{align*}
	&\mu_t \circ \bigl(\id \ot (S \star F_{-t}) \bigr) \circ \Delta \\
	& \quad = (\e_\star^{tL} \ot \mu) \circ \Lambda \circ (\id \ot S \ot F_{-t}) \circ \Delta^{(3)} \displaybreak[0]\\
	& \quad = (\e_\star^{tL} \ot \mu) \circ (\id \ot \beta \ot \id) \circ (\Delta \ot \Delta) \circ (\id \ot S \ot F_{-t}) \circ \Delta^{(3)} \displaybreak[0]\\
	& \quad = (\e_\star^{tL} \ot \mu) \circ (\id \ot \beta \ot \id) \circ (\id \ot \id \ot \beta) \circ (\id \ot \id \ot S \ot S \ot F_{-t}) \circ \Delta^{(5)} \displaybreak[0]\\
	& \quad = (\e_\star^{tL} \ot \id) \circ (\id \ot \beta) \circ (\id \ot \eins \ot \id) \circ (\id \ot S \ot F_{-t}) \circ \Delta^{(3)} \displaybreak[0]\\
	& \quad = (\e_\star^{tL} \ot \eins) \circ (\id \ot S \ot F_{-t})\circ \Delta^{(3)} \\
	& \quad = \eins \circ (F_t \ot F_{-t}) \circ \Delta = \eins\delta.
\end{align*}
\end{proof}
\begin{Cor}
The deformed antipodes $S_t$ of a braided Hopf algebra with additive deformation $\mu_t$ and generator $L$ has the properties
\begin{itemize}
	\item[$(i)$] $S_t(\eins) = \eins$,
	\item[$(ii)$] $S_t \circ \mu_{-t} = \mu_t \circ (S_t \ot S_t) \circ \beta$,
	\item[$(iii)$] $\Delta \circ S_{t+r} = (S_t \ot S_r) \circ \beta \circ \Delta$,
	\item[$(iv)$] if $\calB$ is commutative or cocommutative, we get $S_t \circ S_{-t} = \id$ and
	\item[$(v)$] if we have a braided Hopf $*$-algebra, $S_{-t} \circ * \circ S_t \circ * = \id$ is fulfilled.
\end{itemize}
\end{Cor}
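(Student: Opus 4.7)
The plan is to verify each property in turn, relying on the explicit formula $S_t = S \star \e_\star^{-t\sigma}$ and uniqueness of convolution inverses. As a preliminary, I would record that $\sigma$, $\e_\star^{-t\sigma}$ and $S_t$ are $\beta$-compatible, inheriting this from $S$, $\Delta$ and $L$ via the closure properties of Remark~\ref{kompatibel}; this fact is used implicitly throughout.

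Property $(i)$ is a one-line check: $\sigma(\eins) = L(\eins \ot S(\eins)) = L(\eins \ot \eins) = 0$ using $S(\eins) = \eins$ and $L$-property $(iii)$, so $\e_\star^{-t\sigma}(\eins) = \delta(\eins) = 1$ and $S_t(\eins) = S(\eins) = \eins$.

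The main obstacle is property $(ii)$. I intend to mimic the classical proof that the antipode of a braided Hopf algebra is an algebra anti-homomorphism, working in the convolution algebra $\Hom(\calB \ot \calB, \calB)$ with coalgebra $(\calB \ot \calB, \Lambda)$ on the domain and algebra $(\calB, \mu_t)$ on the codomain, and showing that both sides of $(ii)$ are right convolution inverses of $\mu_t$. For the easy side $S_t \circ \mu_{-t}$, the deformation identity $(\mu_t \ot \mu_{-t}) \circ \Lambda = \Delta \circ \mu$ combined with the antipode property $\mu_t \circ (\id \ot S_t) \circ \Delta = \eins\delta$ yields
\begin{align*}
\mu_t \circ \bigl(\mu_t \ot (S_t \circ \mu_{-t})\bigr) \circ \Lambda = \mu_t \circ (\id \ot S_t) \circ \Delta \circ \mu = \eins\delta \circ \mu = \eins(\delta\ot\delta).
\end{align*}
The hard side is $\mu_t \circ (S_t \ot S_t) \circ \beta$: here a longer diagrammatic calculation is needed, in which $\beta$-compatibility of $S_t$ allows $\beta$ to be pulled through $(S_t \ot S_t)$, the braid equation reorganises the strands arising from $\Lambda$, and the antipode identity is invoked twice in succession to collapse the expression to the same value $\eins(\delta\ot\delta)$. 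Uniqueness of right convolution inverses then yields $(ii)$.

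Properties $(iii)$--$(v)$ are variants of the same technique. For $(iii)$, I would dualise the previous argument, working in $\Hom(\calB, \calB \ot \calB)$ with the mixed algebra $(\mu_t \ot \mu_r) \circ (\id \ot \beta \ot \id)$ on the codomain, and show that both $\Delta \circ S_{t+r}$ and $(S_t \ot S_r) \circ \beta \circ \Delta$ are convolution inverses of $\Delta$; the key ingredients are the coalgebra anti-morphism property $\Delta \circ S = \beta \circ (S \ot S) \circ \Delta$ and the semigroup splitting $\e_\star^{-(t+r)\sigma} = \e_\star^{-t\sigma} \star \e_\star^{-r\sigma}$. For $(iv)$, the plan is to apply $S_t$ to the antipode identity $\mu_{-t} \circ (S_{-t} \ot \id) \circ \Delta = \eins\delta$, use $(i)$ and $(ii)$ to rewrite the left-hand side, and then invoke (co)commutativity---which trivialises $\beta$ via $\beta \circ \Delta = \Delta$ or $\mu \circ \beta = \mu$---together with the identity $S^2 = \id$ from the earlier remark, thereby exhibiting $S_t \circ S_{-t}$ as a convolution inverse of $S_t$ in $(\Hom(\calB,\calB),\star_t)$; uniqueness forces $S_t \circ S_{-t} = \id$. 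For $(v)$, apply $*$ to the antipode identity for $S_t$ and employ the $*$-deformation rule $* \circ \mu_t = \mu_t \circ (* \ot *) \circ \tau$ together with hermiticity of $L$ to read off that $S_{-t} \circ * \circ S_t \circ *$ serves as a convolution inverse of $\id$ in the appropriate convolution algebra, hence equals $\id$ by uniqueness.
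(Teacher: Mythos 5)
The paper does not actually prove this corollary: it only points to the trivially braided case in \cite{Preprint}. Your overall strategy --- express everything through $S_t = S \star \e_\star^{-t\sigma}$ and identify both sides of each identity as convolution inverses of a suitable element in a suitable convolution algebra --- is exactly the intended adaptation of that classical argument, and your verification of $(i)$ and of the ``easy side'' of $(ii)$ (via $(\mu_t \ot \mu_{-t}) \circ \Lambda = \Delta \circ \mu_0 = \Delta \circ \mu$ and the deformed antipode identity) is correct, as is the preliminary observation that $\sigma$, $\e_\star^{-t\sigma}$ and $S_t$ inherit $\beta$-compatibility from $S$, $\Delta$ and $L$.

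There are two places where the logic as stated does not close. First, in $(ii)$ you conclude from both sides being \emph{right} convolution inverses of $\mu_t$ in $\Hom(\calB \ot \calB, \calB)$ and appeal to ``uniqueness of right convolution inverses''; but two right inverses of an element of a monoid need not coincide. The standard repair is to arrange the two computations so that one candidate is a left inverse and the other a right inverse, whence $X = X \star (\mu_t \star Y) = (X \star \mu_t) \star Y = Y$ by associativity --- exactly as in the classical proof that the antipode is an algebra anti-homomorphism. Your diagrammatic computation for $\mu_t \circ (S_t \ot S_t) \circ \beta$ should therefore be set up to produce the convolution on the opposite side from the one you already computed. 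Second, in $(v)$ the phrase ``a convolution inverse of $\id$ \dots hence equals $\id$ by uniqueness'' cannot be right as written: the convolution inverse of $\id$ is an antipode, never the identity (that would force $\mu \circ \Delta = \eins\delta$). What the argument must actually produce, after applying $*$ to the deformed antipode identity and using $* \circ \mu_t = \mu_t \circ (* \ot *) \circ \tau$, $\Delta \circ * = \beta \circ (* \ot *) \circ \tau \circ \Delta$ and hermiticity of $L$, is that $* \circ S_t \circ *$ is a compositional inverse of $S_{-t}$ (equivalently, that $* \circ S_t \circ *$ plays the role of the antipode for the structure whose antipode is known to be $S_{-t}^{-1}$); only then does $S_{-t} \circ * \circ S_t \circ * = \id$ follow. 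With these two repairs the sketch is sound and matches the route the authors defer to.
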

The proof of this corollary is quite similar to the proof in the trivially braided case, see \cite{Preprint}.


\section{Schoenberg Correspondence on Braided $*$-Bialgebras}\label{secSchoen}

In this section we prove the following theorem, which generalizes theorem 2.1.11 of \cite{Wirth} and theorem 2.1 of \cite{FSS}.
\begin{Thm} $($Schoenberg correspondence for additive deformations$)$\label{Sa:Schoenberg_def}
Let $\calB$ be a braided $*$-bialgebra with an additive deformation $(\mu_t)_{t\in \ds R}$ and let $\psi: \calB \to \ds C$ be a hermitian, $\beta$-invariant linear functional with $\psi(\eins)=0$. Then the following two statements are equivalent:
\begin{itemize}
\item[$(i)$] $\varphi_t := \e_\star^{t\psi}$ is a state on $\calB_t$ for all $b \in \calB, t \geq 0$, i.e.\ $\varphi_t (\eins) = 1$ and $\varphi_t \circ \mu_t \, (b^* \ot b) \geq 0$,
\item[$(ii)$] $(\psi \circ \mu + L) (b^* \ot b) \geq 0$ for all $b \in \ker \delta$.\footnote{We say that $\psi$ is $L$-conditionally positive in this case.}
\end{itemize}
\end{Thm}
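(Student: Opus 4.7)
The plan is to prove the two implications separately, with $(i)\Rightarrow(ii)$ a routine differentiation at $t=0$ and $(ii)\Rightarrow(i)$ carrying the substantive content.

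For $(i)\Rightarrow(ii)$, I fix $b \in \ker\delta$ and put $f(t) := \varphi_t(\mu_t(b^* \ot b))$. Since $\delta$ is a $*$-algebra homomorphism with respect to $\mu = \mu_0$, $f(0) = \delta(b^*)\delta(b) = 0$; hypothesis $(i)$ gives $f(t) \geq 0$ for $t \geq 0$, so $f'(0^+) \geq 0$. Expanding $\varphi_t = \delta + t\psi + O(t^2)$ and $\delta\circ\mu_t = \delta\ot\delta + tL + O(t^2)$ (the latter is the differential identity supplied by the generator theorem of section~\ref{secgenerator}), a short computation yields $f'(0) = (\psi\circ\mu + L)(b^* \ot b)$, which is condition $(ii)$.

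For $(ii)\Rightarrow(i)$, I first consolidate the data on $\calB\ot\calB$: set $\chi_t := \varphi_t \circ \mu_t : \calB\ot\calB \to \cC$. Using the additive-deformation identity $\Delta\circ\mu_{t+s} = (\mu_t\ot\mu_s)\circ\Lambda$ together with the convolution semigroup property of $(\varphi_t)$ one obtains
\[ \chi_{t+s} = (\varphi_t\ot\varphi_s) \circ (\mu_t\ot\mu_s) \circ \Lambda = (\chi_t\ot\chi_s)\circ\Lambda = \chi_t\star\chi_s, \]
so $(\chi_t)_{t\geq 0}$ is a pointwise-continuous convolution semigroup on the braided coalgebra $(\calB\ot\calB,\Lambda,\delta\ot\delta)$ with generator $\widetilde\psi := \psi\circ\mu + L$. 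This generator is hermitian (by proposition~\ref{Lhermit}, combining the hermiticity of $\psi$ with that of $L$), $\beta$-invariant (by remark~\ref{kompatibel}, since $\psi$, $\mu$, and $L$ all are), and $\widetilde\psi(\eins\ot\eins) = \psi(\eins) + L(\eins\ot\eins) = 0$.

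It remains to derive $\chi_t(b^* \ot b)\geq 0$ for every $b \in \calB$, $t \geq 0$. To this end, note that the hermitian sesquilinear form $\widetilde K(a,b) := \widetilde\psi(a^*\ot b) = \psi(a^*b) + L(a^*\ot b)$ on $\calB$ is positive semidefinite on $\ker\delta$ by hypothesis $(ii)$ and vanishes at $(\eins,\eins)$, so one can form the pre-Hilbert space $D := \ker\delta/N$ with $N$ its null space. I then plan to build a braided symmetric Fock space $\mathcal{F}$ over $D$ and, following the Sch\"urmann--Schoenberg strategy adapted to the deformed setting, a quantum L\'evy process $j_t : \calB_t \to \mathcal{L}(\mathcal{F})$, $t \geq 0$: a family of $*$-homomorphisms with respect to the \emph{deformed} product $\mu_t$ realizing $\varphi_t(b) = \langle \Omega, j_t(b)\Omega\rangle$. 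Positivity then follows at once,
\[ \varphi_t(\mu_t(b^* \ot b)) = \langle j_t(b)\Omega,\, j_t(b)\Omega\rangle = \|j_t(b)\Omega\|^2 \geq 0, \]
and normalization $\varphi_t(\eins) = 1$ is automatic from $\psi(\eins) = 0$. The main obstacle is constructing $(j_t)$ so that each $j_t$ respects $\mu_t$ (not $\mu_0$) while remaining compatible with the braiding $\beta$: this merges the braided Fock calculus underlying \cite{FSS} with the cocycle-twisted L\'evy-process construction of \cite{Wirth,Diplomarbeit}, so that creation/annihilation operators satisfy $\beta$-braided commutation relations and the preservation component encodes the cocycle $L$.
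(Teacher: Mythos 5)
Your direction $(i)\Rightarrow(ii)$ is exactly the paper's argument and is fine. Your setup for $(ii)\Rightarrow(i)$ is also correct as far as it goes: the identification of $\chi_t=\varphi_t\circ\mu_t$ as a pointwise continuous convolution semigroup on $(\calB\ot\calB,\Lambda)$ with generator $K=\psi\circ\mu+L$ matches the paper's key algebraic identity $\varphi_t\circ\mu_t=\e_\star^{tK}$ (the paper obtains it slightly differently, by splitting $\e_\star^{tK}=\e_\star^{t\psi\circ\mu}\star\e_\star^{tL}$ using that $\psi\circ\mu$ and $L$ commute under convolution, but the content is the same). A small misattribution: hermiticity of $K$ is not an application of Proposition \ref{Lhermit} (which concerns convolutions, not sums); it is immediate from $*\circ\mu=\mu\circ(*\ot*)\circ\tau$ and hermiticity of $\psi$ and $L$.

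The genuine gap is the final and decisive step: showing that conditional positivity of $K$ implies $\e_\star^{tK}(b^*\ot b)\geq 0$ for all $b$ and $t\geq 0$. You defer this to a braided, cocycle-twisted Fock-space/L\'evy-process construction that you explicitly describe as a plan with an unresolved ``main obstacle''; as written, nothing is proved. Note also that this step is not a routine citation even once the semigroup structure is in place: positivity and hermiticity here are phrased via $b^*\ot b$ and $(*\ot*)\circ\tau$, whereas the involution on the braided coalgebra $\calB\ot\calB$ is $\beta\circ(*\ot*)\circ\tau$, so Sch\"urmann's Schoenberg correspondence cannot be applied directly to $(\calB\ot\calB,\Lambda)$. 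The paper bridges exactly this mismatch: it passes to the ordinary ($\tau_{2,2}$-braided) coalgebra $\overline{\calB}\ot\calB$, sets up the bijection $K\mapsto\widetilde K=K\circ(*\ot\id)$ between bilinear and sesquilinear forms, and proves the lemma $\widetilde{M\star K}=\widetilde M\circledast\widetilde K$ for $\beta$-invariant $M$, which gives $\e_\star^{tK}(c^*\ot c)=\e_\circledast^{t\widetilde K}(\overline c\ot c)$ and reduces everything to the known sesquilinear Schoenberg correspondence of Sch\"urmann. Either you carry out that reduction (or an equivalent one), or you actually construct the deformed braided L\'evy process you allude to; at present the positivity claim is unsupported.
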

\begin{proof}[Proof of $(i)\Rightarrow (ii)$]
The function $t \mapsto \varphi_t \circ \mu_t \, (c^* \ot c)$ is positive for $t \geq 0$. For $c \in \ker \delta$ this function vanishes at $0$, since
\begin{align*}
\varphi_0 \circ \mu_0 \, (c^*\ot c) = (\delta \ot \delta) (c^* \ot c) = \betrag{\delta(c)}^2 = 0.
\end{align*}
So the derivative $\left.\frac{\rd}{\rd t}\bigl(\varphi_t\circ\mu_t (b^*\ot b)\bigr)\right|_{t=0}=(\psi\circ\mu+L)(b^*\ot b)$ must be positive in this case.
\end{proof}

The aim of the remainder of this short section is to prove the converse implication.
For a vector space $V$ turn the set $\overline{V}:=\gklammer{\overline{v}:v\in V}$ into a vector space by defining $\overline{v}+\lambda\overline{w}:=\overline{v+\overline{\lambda}w}$.

Now let $(\calC, \Delta, \delta)$ be a $\beta$-braided $*$-coalgebra.\footnote{Here we say $\beta$-braided coalgebra and mean $(\calC, \Delta, \delta, \beta)$ is a braided coalgebra, since we have to distinguish different braidings on the same coalgebra.}
Then $*$ can be interpreted as a linear map from $\calC$ to $\overline{\calC}$ and from $\overline{\calC}$ to $\calC$. We define  $\overline{a \ot b} := \overline{a} \ot \overline{b}$ and set
\begin{align*}
\overline{\Delta} &:= (* \ot *) \circ \tau \circ \beta^{-1} \circ \Delta \circ *, && \text{i.e.\ } & \overline{\Delta} (\overline{c}) &= \overline{\Delta(c)},\\
\overline{\delta} &:= \delta \circ *, && \text{i.e.\ } & \overline{\delta} (\overline{c}) &= \overline{\delta(c)},\\
\overline{\beta} &:= (* \ot *) \circ \tau \circ \beta^{-1} \circ (* \ot *) \circ \tau, && \text{i.e.\ } & \overline{\beta} (\overline a \ot \overline b) &= \overline{\beta (a \ot b)}.
\end{align*}
Then $\bigl( \overline{\calC}, \overline{\Delta}, \delta \bigr)$ is a $\overline{\beta}$-braided $*$-coalgebra and $\bigl( \overline{\calC} \ot \calC, (\id \ot \tau \ot \id) \circ (\overline{\Delta} \ot \Delta), \overline{\delta} \ot \delta \bigr)$ is a usual $*$-coalgebra, i.e.\ $\tau_{2,2}$-braided $*$-coalgebra.

We call a linear mapping $\calC\ot \calC\to \ds C$ \emph{bilinear form} on $\calC$ and a linear mapping $\overline{\calC}\ot \calC\to\ds C$ \emph{sesquilinear form} on $\calC$. For a bilinear form $K$ we define the corresponding sesquilinear form $\widetilde{K}:=K\circ(*\ot\id)$. This is a bijection of bilinear forms and sesquilinear forms on $\calC$.

\begin{Lem}
Let $\star$ be the convolution of bilinear forms w.r.t.\ the comultiplication $\Lambda = (\id \ot \beta \ot \id) \circ (\Delta \ot \Delta)$ on $\calC \ot \calC$ and let $\circledast$ be the convolution of sesquilinear forms w.r.t.\ the comultiplication $(\id \ot \tau \ot \id) \circ (\overline{\Delta} \ot \Delta)$ on $\overline{\calC}\ot \calC$. For two bilinear forms $M$ and $K$ on the $\beta$-braided $*$-coalgebra $\calC$ the following is fulfilled. If $M$ is $\beta$-invariant, we have
\begin{align*}
\widetilde{M \star K} = \widetilde{M} \circledast \widetilde{K}.
\end{align*}
\end{Lem}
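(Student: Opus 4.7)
The plan is to evaluate both sides on a generic pure tensor $\overline{a} \ot b \in \overline{\calC} \ot \calC$, reduce each to a common Sweedler expression using the formal definitions, and reconcile the two through the $\beta$-invariance of $M$ in a single step.

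First I would unwind the right-hand side. Using $\overline{\Delta} = (* \ot *) \circ \tau \circ \beta^{-1} \circ \Delta \circ *$ together with $\widetilde{M} = M \circ (* \ot \id)$ and $\widetilde{K} = K \circ (* \ot \id)$, the pair of $*$'s coming from $\overline{\Delta}$ (acting $\calC \to \overline{\calC}$) is inverted by the $*$'s from $\widetilde{M}$ and $\widetilde{K}$ (acting $\overline{\calC} \to \calC$), while the $\tau$ inside $\overline{\Delta}$ composes with $(\id \ot \tau \ot \id)$ so that what survives acts cleanly on $\calC^{\ot 4}$. Writing $\beta^{-1}(\Delta(a^*)) = \sum e \ot f$ in Sweedler shorthand, one obtains
\begin{align*}
(\widetilde{M} \circledast \widetilde{K})(\overline{a} \ot b) = M(f \ot b_{(1)})\, K(e \ot b_{(2)}).
\end{align*}

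For the left-hand side I would re-express $\Delta(a^*) = \sum \beta(e \ot f)$ using the same $e, f$ as above. Substituting into $\Lambda = (\id \ot \beta \ot \id) \circ (\Delta \ot \Delta)$ and using the identity $(\id \ot \beta) \circ (\beta \ot \id) = \beta_{1,2}$, the four-fold coproduct collapses to
\begin{align*}
\Lambda(a^* \ot b) = (\beta_{1,2} \ot \id)(e \ot f \ot b_{(1)} \ot b_{(2)}).
\end{align*}
Applying $M \ot K$ and pulling the scalar value of $M$ through $K$ by linearity in the first slot, the LHS becomes $K\bigl(((M \ot \id) \circ \beta_{1,2})(e \ot f \ot b_{(1)}) \ot b_{(2)}\bigr)$. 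Now the $\beta$-invariance of $M$ is precisely the identity $(M \ot \id) \circ \beta_{1,2} = \id \ot M$, which dissolves the braiding: the inner expression collapses to $e \cdot M(f \ot b_{(1)})$, and the LHS equals $M(f \ot b_{(1)})\, K(e \ot b_{(2)})$, matching the right-hand side.

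The real obstacle is the bookkeeping in the first step: the definition of $\overline{\Delta}$ packages several $*$'s, a flip and an inverse braiding together, and one must keep careful track of which factor of $\overline{\calC} \ot \overline{\calC}$ corresponds to which Sweedler leg (and that the two flavors of $*$, namely $\calC \to \overline{\calC}$ and $\overline{\calC} \to \calC$, really are mutually inverse). Once the RHS is reduced to the simple Sweedler expression above, the argument is a one-line consequence of $\beta$-invariance. The parallel case in which $K$ is $\beta^{-1}$-invariant instead (also relevant in view of Proposition \ref{Lhermit}) is handled symmetrically, absorbing the braiding from the other side via $(\id \ot K) \circ \beta_{2,1} = K \ot \id$.
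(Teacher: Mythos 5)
Your proof is correct and follows essentially the same route as the paper's: both reduce $\widetilde{M\star K}$ and $\widetilde{M}\circledast\widetilde{K}$ to a common expression by unpacking $\overline{\Delta}=(*\ot *)\circ\tau\circ\beta^{-1}\circ\Delta\circ *$ and then cancel the braiding via the $\beta$-invariance of $M$ in the form $(M\ot\id)\circ\beta_{1,2}=\id\ot M$; you merely phrase it element-wise in Sweedler notation where the paper composes maps. The concluding remark about the $\beta^{-1}$-invariant case is not needed for the lemma as stated but is harmless.
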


\begin{proof}
\begin{align*}
&\widetilde{M \star K} = (M \star K) \circ (* \ot \id) \\
& \quad = (M \ot K) \circ (\id \ot \beta \ot \id) \circ (\Delta \ot \Delta) \circ (* \ot \id) \displaybreak[0]\\
& \quad = (M \ot K) \circ (\id \ot \beta \ot \id) \circ (\beta \ot \id \ot \id) \circ (* \ot * \ot \id \ot \id) \circ (\tau \ot \id \ot \id) \circ (\overline{\Delta} \ot \Delta) \displaybreak[0]\\
& \quad = K \circ (\id \ot M \ot \id) \circ (* \ot * \ot \id \ot \id) \circ (\tau \ot \id \ot \id) \circ (\overline{\Delta} \ot \Delta) \displaybreak[0]\\
& \quad = (M \ot K) \circ (\id \ot \tau \ot \id) \circ (\tau \ot \id \ot \id) \circ (* \ot * \ot \id \ot \id) \circ (\tau \ot \id \ot \id) \circ (\overline{\Delta} \ot \Delta) \displaybreak[0]\\
& \quad = (M \ot K) \circ (* \ot \id \ot * \ot \id) \circ (\id \ot \tau \ot \id) \circ (\overline{\Delta} \ot \Delta) \\
& \quad = \widetilde{M} \circledast \widetilde{K}.
\end{align*}
\end{proof}
With this lemma we get for a $\beta$-invariant bilinear form $K$ on $\calC$
\begin{align*}
\e_{\star}^{tK}(c^*\ot c)= \e_{\circledast}^{t\widetilde{K}}(\overline{c}\ot c)
\end{align*}
so the following is now a direct consequence of the Schoenberg correspondence for sesquilinear forms on coalgebras due to \textsc{Schürmann} \cite{schurman3}.
\begin{Lem}
Let $K$ be a $\beta$-invariant, hermitian bilinear form on a the $\beta$ braided $*$-coalgebra $\calC$. Then the following two statements are equivalent:
\begin{itemize}
\item $\e_{\star}^{tK}(c^*\ot c)\geq0$ for all $c\in \calC,t\geq0$,
\item $K(c^*\ot c)\geq0$ for all $c\in\ker\delta$.
\end{itemize}
\end{Lem}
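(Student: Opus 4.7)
The strategy is to transport the whole problem along the bijection $K\mapsto \widetilde K$ from bilinear forms on $\calC$ to sesquilinear forms on $\calC$ (viewed as a form on $\overline\calC\ot \calC$), and then invoke the classical Schoenberg correspondence of Schürmann for sesquilinear forms on ordinary $*$-coalgebras, cited as \cite{schurman3}. Concretely, I would first observe that for the $\beta$-invariant bilinear form $K$, all convolution powers $K^{\star n}$ are again $\beta$-invariant (convolution of $\beta$-invariant maps is $\beta$-invariant by the remark on $\beta$-compatibility of tensor products and compositions), so the preceding lemma applies inductively to give $\widetilde{K^{\star n}}=\widetilde K^{\circledast n}$ for every $n\in\ds N$. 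Summing the exponential series and using pointwise convergence then yields the identity
\begin{align*}
\e_\star^{tK}(c^*\ot c)\;=\;\e_\circledast^{t\widetilde K}(\overline c\ot c)
\end{align*}
that is already stated in the text just before the lemma, but now on a solid footing.

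Next I would check that $\widetilde K$ is a \emph{hermitian} sesquilinear form on the ordinary $*$-coalgebra $\bigl(\overline\calC\ot\calC,(\id\ot\tau\ot\id)\circ(\overline\Delta\ot\Delta),\overline\delta\ot\delta\bigr)$, i.e.\ $\overline{\widetilde K(\overline a\ot b)}=\widetilde K(\overline b\ot a)$. Unfolding the definition $\widetilde K=K\circ(*\ot\id)$ and using $b^{**}=b$ together with the hermiticity $K(a^*\ot b^*)=\overline{K(b\ot a)}$ of $K$ gives both sides equal to $K(b^*\ot a)$, so the hermiticity transfers. This is the only small verification needed to invoke the classical result.

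Now Schürmann's Schoenberg correspondence for hermitian sesquilinear forms on $*$-coalgebras applies to $\widetilde K$ on $\overline\calC\ot\calC$ (with counit $\overline\delta\ot\delta$): the convolution semigroup $\e_\circledast^{t\widetilde K}$ is positive on diagonal elements $\overline c\ot c$ for all $t\geq 0$ iff $\widetilde K(\overline c\ot c)\geq 0$ for all $c$ with $(\overline\delta\ot\delta)(\overline c\ot c)=|\delta(c)|^2=0$, i.e.\ for all $c\in\ker\delta$. Translating back via $\widetilde K(\overline c\ot c)=K(c^*\ot c)$ and $\e_\circledast^{t\widetilde K}(\overline c\ot c)=\e_\star^{tK}(c^*\ot c)$ gives precisely the equivalence claimed in the lemma.

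The only genuine obstacle is the second step, the matching of hermiticity conventions: the paper distinguishes carefully between the involution $*_{\calB\ot\calB}=\beta\circ(*\ot *)\circ\tau$ on the tensor algebra and the "untwisted" involution $(*\ot *)\circ\tau$ used in its hermiticity convention $K(a^*\ot b^*)=\overline{K(b\ot a)}$, and one must confirm that it is this latter, flip-based convention which corresponds to hermiticity of $\widetilde K$ as a sesquilinear form in Schürmann's sense. Once this bookkeeping is done, the rest of the argument is a direct reduction to \cite{schurman3}.
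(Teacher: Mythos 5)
Your proposal is correct and follows essentially the same route as the paper: the paper also deduces the lemma directly from the identity $\e_{\star}^{tK}(c^*\ot c)=\e_{\circledast}^{t\widetilde{K}}(\overline{c}\ot c)$ (obtained from the preceding lemma) together with Sch\"urmann's Schoenberg correspondence for sesquilinear forms. Your explicit verification that hermiticity of $K$ in the flip-based sense transfers to hermiticity of $\widetilde{K}$ as a sesquilinear form is a detail the paper leaves implicit, and it is carried out correctly.
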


With this we are able to prove the Schoenberg correspondence.
\begin{proof}[Proof of Theorem \ref{Sa:Schoenberg_def}, $(ii)\Rightarrow (i)$]
Let $L$ be the generator of the additive deformation $(\mu_t)_{t\in\ds R}$ and define $K:=\psi\circ\mu+L$, which is a hermitian, conditionally positive bilinear form on the $\beta$-braided $*$-bialgebra $\calB$. With the previous lemma we conclude
\begin{align*}
0\leq \e_{\star}^{tK}(c^*\ot c)=\e_{\star}^{t\psi\circ\mu+tL}(c^*\ot c)=\e_{\star}^{t\psi\circ\mu} \star \e_{\star}^{tL}(c^*\ot c)&=\e_\star^{t\psi}\circ (\mu\star \e_{\star}^{tL})(c^*\ot c)\\
&=\varphi_t\circ\mu_t (c^*\ot c),
\end{align*}
since $(\psi\circ\mu)\star L=\psi\circ(L\star\mu)=\psi\circ(\mu\star L)=L\star (\psi\circ\mu)$ and $\mu$ is a coalgebra homomorphism. 

From $\psi(\eins)=0$ it follows directly that $\e_\star^{t\psi}(\eins)=\e^{t\psi(\eins)}=\e^0=1$, since $\Delta(\eins)=\eins\ot\eins$.
\end{proof}

\section{Examples}\label{sec:examples}

Let $\calC$ be a coalgebra. An element $c \in \calC$ is called primitive, if
\begin{align*}
\Delta(c)= c \ot \eins + \eins \ot c.
\end{align*}
It follows directly that $\delta(c)=0$ for every primitive element $c$.

\begin{Prop}\label{prop:primitive}
Let $\calB$ be a $\beta$-braided bialgebra with additive deformation $\mu_t=\mu\star e_\star^{tL}$ and $a,b\in \calB$. If $a$ and $b$ are primitive, we have
\begin{align*}
\mu_t (a \ot b)= ab + tL(a \ot b) \eins.
\end{align*}
\end{Prop}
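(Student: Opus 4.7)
The plan is to unfold $\mu_t(a\ot b)=(\mu\ot e_\star^{tL})\circ\Lambda(a\ot b)$, compute $\Lambda(a\ot b)$ explicitly using primitivity, and then evaluate each of the resulting summands.

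First, I would expand $\Lambda(a\ot b)=(\id\ot\beta\ot\id)(\Delta(a)\ot\Delta(b))$. Multiplying out $(a\ot\eins+\eins\ot a)\ot(b\ot\eins+\eins\ot b)$ and invoking the $\beta$-compatibility of $\eins$ (which gives $\beta(\eins\ot x)=x\ot\eins$ and $\beta(x\ot\eins)=\eins\ot x$), I obtain
\[
\Lambda(a\ot b)=a\ot b\ot\eins\ot\eins+a\ot\eins\ot\eins\ot b+\eins\ot\beta(a\ot b)\ot\eins+\eins\ot\eins\ot a\ot b.
\]

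Second, I would establish the auxiliary identity $L(c\ot\eins)=L(\eins\ot c)=0$ for every $c\in\calB$ by specializing the cocycle property $(iv)$ to $c\ot\eins\ot\eins$ and $\eins\ot\eins\ot c$; in each case three of the four terms cancel and the remaining one is $\pm L(c\ot\eins)$ (respectively $L(\eins\ot c)$), forcing it to vanish.

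Third, I would use this to compute $e_\star^{tL}$ on the relevant inputs. The unital embeddings $\iota_1,\iota_2\colon\calB\to\calB\ot\calB$ with $\iota_1(c)=c\ot\eins$ and $\iota_2(c)=\eins\ot c$ are coalgebra homomorphisms (a short check using $\beta(\eins\ot x)=x\ot\eins$ and $\beta(x\ot\eins)=\eins\ot x$), so $e_\star^{tL}\circ\iota_i$ is itself a convolution exponential, namely of $L\circ\iota_i=0$, and hence equals $\delta$. This yields $e_\star^{tL}(c\ot\eins)=e_\star^{tL}(\eins\ot c)=\delta(c)$. Next I claim $e_\star^{tL}(a\ot b)=tL(a\ot b)$ for primitive $a,b$; equivalently $L^{\star n}(a\ot b)=0$ for $n\geq 2$. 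Writing $L^{\star n}=(L\ot L^{\star(n-1)})\circ\Lambda$ and using the four-term formula above, every resulting summand picks up a factor of the form $L(a\ot\eins)$, $L(\eins\ot b)$, $L(\eins\ot c_i)$ or $L(\eins\ot\eins)$ (where $\beta(a\ot b)=\sum c_i\ot d_i$), each of which vanishes by the auxiliary identity.

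Finally, I would substitute these evaluations into $\mu_t(a\ot b)=(\mu\ot e_\star^{tL})(\Lambda(a\ot b))$ term by term: the first summand gives $ab\cdot 1=ab$; the second gives $a\cdot\delta(b)=0$; the third gives $\sum_i c_i\,\delta(d_i)=(\id\ot\delta)(\beta(a\ot b))=(\delta\ot\id)(a\ot b)=\delta(a)\,b=0$ by the $\beta$-compatibility of $\delta$ together with primitivity of $a$; the fourth gives $\eins\cdot tL(a\ot b)=tL(a\ot b)\,\eins$. Summing produces the claim. The main obstacle is the twisted middle term $\eins\ot\beta(a\ot b)\ot\eins$, which is the only place where the braiding genuinely enters; this is exactly tamed by the $\beta$-compatibility of $\delta$, reducing the braided computation to the familiar non-braided one.
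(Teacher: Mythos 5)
Your proof is correct and follows essentially the same route as the paper: expand $\Lambda(a\ot b)$ into the four summands using primitivity and the $\beta$-compatibility of $\eins$, then evaluate $\e_\star^{tL}$ on each. You in fact supply details the paper leaves implicit --- the derivation of $L(c\ot\eins)=L(\eins\ot c)=0$ from the cocycle identity, and, more importantly, the treatment of the middle term $\eins\ot\beta(a\ot b)\ot\eins$ via $(\id\ot\delta)\circ\beta=\delta\ot\id$, which is needed because the components of $\beta(a\ot b)$ need not themselves be primitive.
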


\begin{proof}
First let us calculate the coproduct.
\begin{align*}
\Lambda (a\ot b)&=(\id\ot\beta\ot\id )\bigl(\Delta (a)\ot\Delta (b)\bigr)\\
&=(\id\ot\beta\ot\id ) \bigl( (a\ot\eins+\eins\ot a)\ot 
(b\ot\eins+\eins\ot b) \bigr)\\
&=(\id\ot\beta\ot\id )
(a\ot\eins\ot b\ot\eins
+a\ot\eins\ot \eins\ot b
+\eins\ot a\ot b\ot\eins
+\eins\ot a\ot \eins\ot b)\\
&=a\ot b\ot\eins\ot\eins
+a\ot\eins\ot \eins\ot b
+\eins\ot \beta (a\ot b)\ot\eins
+\eins\ot\eins\ot a\ot b
\end{align*}
Since $L(\eins\ot c)=L(c\ot\eins)=0$ for all $c\in \calB$ and $\delta(b)=\delta(a)=0$, we get
\begin{align*}
\e_\star^{tL}(a\ot b)
&=(\delta\ot\delta+tL+t^2/2 L\star L + \cdots)(a\ot b)=tL(a\ot b)\\
\e_\star^{tL}(a\ot \eins)&= \e_\star^{tL}(\eins\ot b)=0\\
\e_\star^{tL}(\eins\ot \eins)&=(\delta\ot\delta)(\eins \ot \eins)=1.
\end{align*}
It follows that
\begin{align*}
\mu_t (a\ot b)=(\mu \star \e_\star^{tL})\ (a\ot b)=ab+tL(a\ot b)\eins.
\end{align*}
\end{proof}

Consider the polynomial algebra $\widetilde{\calB}:=\cC \langle x,x^* \rangle$ in two non-commuting adjoint indeterminates. For a monomial $M$ we define the grade $g(M)$ as the degree of the monomial $M$. Then
\begin{align*}
	\beta(M\ot N):=(-1)^{g(M)g(N)} N \ot M
\end{align*}
for monomials $M, N$ defines a braiding on $\cC \langle x,x^* \rangle$, which is a symmetry, i.e.\ $\beta^2=\id\ot\id$. So $\beta$-invariance of a map is equivalent to $\beta^{-1}$-invariance. It is easily checked that the multiplication is $\beta$-invariant. This is turned into a  $\beta$-braided Hopf $*$-bialgebra by defining comultiplication, counit and antipode on the generators as
\begin{align*}
	\Delta \bigl( x^{(*)} \bigr) = x^{(*)} \ot \eins + \eins \ot x^{(*)},
 \quad \delta \bigl( x^{(*)} \bigr) = 0,
\quad S(x^{(*)})=-x^{(*)}
\end{align*}
and extending them as algebra homomorphisms, resp.\ anti-homomorphism in the case of $S$. The ideal $I$ generated by elements of the form $xx^*+x^*x$ is a coideal.
One has to show $\delta(I)=0$, which is obvious, and $\Delta(I)\subset I\ot \widetilde{\calB} + \widetilde{\calB}\ot I$. Therefore we calculate
\begin{align*}
\Delta(xx^*) = \Delta(x)\Delta(x^*)
&=xx^* \ot \eins + x \ot x^* + \beta (x \ot x^*) + \eins \ot xx^*\\
&=xx^* \ot \eins + x \ot x^* - x^* \ot x + \eins \ot xx^*
\end{align*}
and analogously $\Delta(x^*x)=x^*x \ot \eins + x^* \ot x - x \ot x^* + \eins \ot x^*x$. Combining these two equations, we get 
\begin{align*}
\Delta(xx^*+x^*x)=(xx^*+x^*x)\ot \eins+\eins\ot(xx^*+x^*x)\in I\ot \widetilde{\calB} + \widetilde{\calB} \ot I.
\end{align*}
Furthermore, we have $\beta(I\ot \widetilde{\calB} + \widetilde{\calB}\ot I)\subset I\ot \widetilde{\calB} + \widetilde{\calB}\ot I$, so $\calB:=\widetilde{\calB}/I$ is also a braided Hopf $*$-algebra. A hermitian 2-cocycle on $\calB$ is given by $L(x^* \ot x) = 1$ and $L(M\ot N)=0$ for all other monomials. We want to show that it is commuting and $\beta$-compatible.

We use the following general proposition.

\begin{Prop} Let $\calB$ be a braided bialgebra and $\beta$ be a symmetry, i.e.\ $\beta\circ\beta =\id\ot\id$. The equations
\begin{align*}
\beta\circ\Delta(a)=\Delta(a)\quad\text{and}\quad\beta\circ\Delta(b)=\Delta(b) \quad \text{imply} \quad \beta\circ\Delta(ab)=\Delta(ab)
\end{align*}
for all $a,b\in \calB$. In particular, $\calB$ is cocommutative, if $\calB$ is generated by primitive elements.
\end{Prop}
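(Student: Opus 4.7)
The plan is to use the braided bialgebra condition
$\Delta\circ\mu = (\mu\ot\mu)\circ(\id\ot\beta\ot\id)\circ(\Delta\ot\Delta)$
to expand $\Delta(ab)$, then push the outer $\beta$ inward past each structural map until it lands directly on $\Delta(a)$ and $\Delta(b)$, where the hypotheses close the argument. Two auxiliary identities make this work.

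First I would show $\beta\circ(\mu\ot\mu) = (\mu\ot\mu)\circ\beta_{2,2}$. Writing $\mu\ot\mu = (\id\ot\mu)\circ(\mu\ot\id\ot\id)$, the $\beta^{-1}$-invariance of $\mu$ rewrites $\beta\circ(\mu\ot\id)$ as $(\id\ot\mu)\circ\beta_{2,1}$, and then the $\beta$-invariance of $\mu$ extended to two strands on the left rewrites $\beta_{2,1}\circ(\id\ot\id\ot\mu)$ as $(\mu\ot\id\ot\id)\circ\beta_{2,2}$. Both forms of invariance are available because $\beta$ is a symmetry. Second, I would show $\beta_{2,2}\circ(\id\ot\beta\ot\id) = (\id\ot\beta\ot\id)\circ(\beta\ot\beta)$ by expanding $\beta_{2,2} = (\id\ot\beta\ot\id)\circ(\beta\ot\beta)\circ(\id\ot\beta\ot\id)$ from the inductive definition and collapsing the adjacent factors $(\id\ot\beta\ot\id)\circ(\id\ot\beta\ot\id)$ using $\beta^2=\id$.

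Combining these two identities with the bialgebra condition yields
$\beta\circ\Delta(ab) = (\mu\ot\mu)\circ(\id\ot\beta\ot\id)\circ(\beta\ot\beta)\circ(\Delta(a)\ot\Delta(b))$,
and the hypotheses $\beta\circ\Delta(a)=\Delta(a)$ and $\beta\circ\Delta(b)=\Delta(b)$ remove the $(\beta\ot\beta)$, leaving precisely the braided bialgebra expression for $\Delta(ab)$. For the \emph{in particular} clause, I would first verify that every primitive $x$ satisfies $\beta\circ\Delta(x)=\Delta(x)$: writing $\Delta(x)=x\ot\eins+\eins\ot x$, the $\beta$-compatibility of $\eins$ gives $\beta(x\ot\eins)=\eins\ot x$ and $\beta(\eins\ot x)=x\ot\eins$, so the sum is fixed; likewise $\beta\circ\Delta(\eins)=\Delta(\eins)$. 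The main part of the proposition then extends this to arbitrary products of generators by induction on length, and linearity delivers cocommutativity on all of $\calB$.

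The main obstacle I expect is bookkeeping: tracking which tensor slots each occurrence of $\beta$, $\beta_{2,1}$, $\beta_{2,2}$ and $(\id\ot\beta\ot\id)$ acts on, and applying the extended $\beta$-invariance of $\mu$ with the correct indices. This kind of manipulation is essentially automatic in the braid diagram calculus used throughout the paper but somewhat fiddly when written purely algebraically.
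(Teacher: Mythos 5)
Your proposal is correct and follows essentially the same route as the paper: apply the braided bialgebra condition, pull $\beta$ through $\mu\ot\mu$ to obtain $\beta_{2,2}$ (which is exactly the compatibility statement of Remark \ref{kompatibel}), expand $\beta_{2,2}=(\id\ot\beta\ot\id)\circ(\beta\ot\beta)\circ(\id\ot\beta\ot\id)$ and cancel one factor against the bialgebra condition's $(\id\ot\beta\ot\id)$ using $\beta^2=\id$, then absorb $(\beta\ot\beta)$ into the hypotheses; the primitive-element case via unit compatibility is likewise the paper's argument. The only quibble is a minor mismatch between the decomposition $\mu\ot\mu=(\id\ot\mu)\circ(\mu\ot\id\ot\id)$ you announce and the invariance steps you then apply, but either pairing of decomposition and invariance yields the needed identity.
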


\begin{proof}
\begin{align*}
&\beta\circ\Delta\circ\mu (a\ot b)\\
& \quad =\beta\circ(\mu\ot\mu)\circ(\id\ot\beta\ot\id) \circ (\Delta\ot\Delta)(a\ot b)\\
& \quad =(\mu\ot\mu)\circ\underbrace{(\id\ot\beta\ot\id)\circ(\beta\ot\beta)\circ (\id\ot\beta\ot\id )}_{\beta_{2,2}}\circ (\id \ot \beta \ot \id ) \circ (\Delta \ot \Delta )(a\ot b) \displaybreak[0]\\
& \quad =(\mu\ot\mu)\circ(\id\ot\beta\ot\id)\circ(\beta\ot\beta)(\Delta(a) \ot \Delta(b)) \displaybreak[0]\\
& \quad =(\mu\ot\mu)\circ(\id\ot\beta\ot\id)\circ(\Delta \ot \Delta )(a\ot b)\\
& \quad=\Delta\circ\mu (a\ot b)
\end{align*}
The second statement is a direct consequence of the first, since for a primitive element $a\in \calB$
\begin{align*}
\beta\circ\Delta(a)=\beta (a\ot\eins+\eins\ot a)=\eins\ot a +a\ot \eins=\Delta(a)
\end{align*}
and finite products of generators span $\calB$.
\end{proof}
In our example $\beta$ is a symmetry and $\calB$ is generated by primitive elements, so $\calB$ is cocommutative. Then also $\calB \ot \calB$ is cocommutative, as
\begin{align*}
\beta_{2,2}\circ\Lambda = (\id \ot \beta \ot \id ) \circ (\beta \ot \beta) \circ (\id \ot \beta^2 \ot \id) \circ (\Delta\ot\Delta) = \Lambda.
\end{align*}
So $L\star\mu=\mu\star L$ is fulfilled. To see that $L$ is $\beta$-invariant, we only need to calculate
\begin{align*}
(L \ot \id) \circ (\id \ot \beta) \circ (\beta \ot \id) (M \ot x^* \ot x) = (-1)^{2g(M)}M = M = (\id \ot L) (M \ot x^* \ot x) 
\end{align*}
for all monomials $M$, as $L$ vanishes for other terms. $L$ is obviously hermitian.  We have now completed showing that $L$ is the generator of an additive $*$-deformation.

We calculate $\mu_t=(\mu\ot e_\star^{tL})\circ\Lambda$. First we know from proposition \ref{prop:primitive} that 
\begin{align}\label{eq:ex:commutator}
\mu_t (x^*\ot x)=\mu (x^*\ot x)+tL(x^*\ot x)\eins=-xx^*+t\eins=-\mu_t (x\ot x^*)+t\eins,
\end{align}
since $x$ and $x^*$ are primitive.
Next notice that $\mu_t(M\ot N)$ can only differ from $\mu(M\ot N)=MN$, when $M$ contains a factor $x^*$ and $N$ contains a factor $x$. With these two facts we know $\mu_t$, because of associativity. Let $M=x^{m_1}(x^*)^{m_2}$ and $N=x^{n_1}(x^*)^{n_2}$ with $m_1, m_2, n_1, n_2\in\mathds{N}$ be two monomials. Write $m=m_1+m_2$ and $n=n_1+n_2$. Then
\begin{align*}
\mu_t (M\ot N)&=
\mu_t\circ \left( \mu^{(m)}\ot\mu^{(n)} \right) \left( (x^{\ot m_1}\ot (x^*)^{\ot m_2})\ot (x^{\ot n_1}\ot (x^*)^{\ot n_2}) \right)\\
&=\mu_t\circ \left( \mu_t^{(m)}\ot\mu_t^{(n)} \right) \left( (x^{\ot m_1}\ot (x^*)^{\ot m_2})\ot (x^{\ot n_1}\ot (x^*)^{\ot n_2}) \right)\\
&=\mu_t^{(m+n)} \left( x^{\ot m_1}\ot (x^*)^{\ot m_2}\ot x^{\ot n_1}\ot (x^*)^{\ot n_2} \right)
\end{align*}
Now one can use (\ref{eq:ex:commutator}) to calculate this.
The $*$-algebra $\calB_t=(\calB,\mu_t)$ is isomorphic to the $*$-algebra $\calA_t$ generated by $a,a^{*}$ and $\eins$ with the relation $aa^{*}+a^{*}a=t\eins$. The map $a\mapsto x$, $a^*\mapsto x^*$ can be extended as an algebra homomorphism $\widetilde{\Phi}_t:\mathds{C}\sklammer{a,a^*}\rightarrow \calB_t$. Since the relation is respected, i.e.\
\begin{align*}
\widetilde{\Phi}_t (aa^*+a^* a)=\mu_t (x\ot x^*+x^*\ot x)=t=\widetilde{\Phi}_t(t\eins),
\end{align*}
we get an algebra homomorphism $\Phi_t:\calA_t\rightarrow \calB_t$. It is clear from our considerations on $\mu_t$ that this is an isomorphism as it maps the vector space basis $\gklammer{a^k(a^*)^l \mid k, l\in\mathds{N}}$ of $\calA_t$ to the vector space basis $\gklammer{x^k(x^*)^l \mid k,l\in\mathds{N}}$ of $\calB_t$. 

Since $0$ is a hermitian, $L$-conditionally positive linear functional vanishing at $\eins$, the exponential $e_\star^{t0}=\delta$ is a state on every $\calB_t$. Note that this is less trivial than it seems at a first glance because e.g.\ $\delta\bigl(\mu_t(x^*\ot x)\bigr)=\delta(-xx^*+t\eins)=t$.

For every $q\neq 0$ there is a unique braiding $\beta_q$ on the algebra $\cC \sklammer{x,x^*}$ of two non-commuting, adjoint indeterminates s.t.\ 
\begin{itemize}
\item  $\cC \sklammer{x,x^*}$ is a $\beta_q$-braided $*$-algebra
\item $\beta_q$ is defined on the generators in the following way:
  \begin{align*}
    \beta_q (x \ot x)   &= q~ x \ot x       & \beta_q(x \ot x^*)     &= q~ x^* \ot x       \\
    \beta_q (x^* \ot x) &= q^{-1}~ x \ot x^* & \beta_q ( x^* \ot x^*) &= q^{-1}~ x^* \ot x^* 
  \end{align*}
\end{itemize}
These equations determine $\beta_q$ on all pairs of monomials due to the compatibility of the unit and the multiplication. There exists a compatible Hopf $*$-algebra structure s.t.\ $\Delta(x^{(*)})=x^{(*)}\ot \eins+\eins\ot x^{(*)}$ and the ideal $I_q$ generated by elements of the form $xx^*-qx^*x$ is a coideal with $\beta_q(I\ot \widetilde{\calB} + \widetilde{\calB}\ot I)\subset I\ot \widetilde{\calB} + \widetilde{\calB}\ot I$. Dividing by this biideal yields a $\beta_q$-braided Hopf $*$-algebra $\calB_q$ with two $q$-commuting, primitive, adjoint generators. Note that for $q=-1$ the previous example is obtained. But for $q\neq\pm 1$ a multiplication $\mu_t$ on $\calB_q$ s.t.\ 
\begin{equation*}
  \mu_t(x \ot x^* - q x^* \ot x)=t\eins
\end{equation*}
cannot be $\beta_q$-compatible, as it would follow that
\begin{equation*}
  (\mu_t \ot \id) \circ 
\underbrace{(\id \ot \beta_q) \circ (\beta_q \ot \id)}_{(\beta_q)_{1,2}} 
\bigl(x \ot (x \ot x^* - q~ x^* \ot x)\bigr)=q^2~t \eins \ot x,
\end{equation*}
but
\begin{equation*}
  \beta_q \circ (\id \ot \mu_t)\bigl(x \ot(x \ot x^* - q~ x^* \ot x)\bigr)=t\eins \ot x.
\end{equation*}
So this can only work for the considered cases $q=\pm 1$.
Still our version of the Schoenberg correspondence applies to the braided Hopf $*$-algebras $(\widetilde{\calB},\beta_q)$ and $(\calB_q,\beta_q)$.
\newpage

\bibliographystyle{alpha}
\linespread{1.25}

\addcontentsline{toc}{section}{Bibliography}
\bibliography{literatur}

\end{document}